\newtheorem{claim}{Claim}
\theoremstyle{definition}
\newtheorem{theorem}{Theorem}
\newtheorem{proposition}[theorem]{Proposition}
\newtheorem{corollary}[theorem]{Corollary}
\newtheorem{lemma}[theorem]{Lemma}
\newcommand{\Aut}{\text{Aut}}
\title{Smallest cyclically covering subspaces of $\mathbb{F}_q^n$,\\
and lower bounds in Isbell's conjecture}
\author{ Peter Cameron\\
\and
David Ellis\\
\and
William Raynaud
}
\date{8th October 2018}
\begin{document}
	\maketitle

	\begin{abstract}

For a prime power $q$ and a positive integer $n$, we say a subspace $U$ of ${\mathbb{F}_q^n}$ is {\em cyclically covering} if the union of the cyclic shifts of $U$ is equal to $\mathbb{F}_q^n$. We investigate the problem of determining the minimum possible dimension of a cyclically covering subspace of $\mathbb{F}_q^n$. (This is a natural generalisation of a problem posed in 1991 by the first author.) We prove several upper and lower bounds, and for each fixed $q$, we answer the question completely for infinitely many values of $n$ (which take the form of certain geometric series). Our results imply lower bounds for a well-known conjecture of Isbell, and a generalisation theoreof, supplementing lower bounds due to Spiga. We also consider the analogous problem for general representations of groups. We use arguments from combinatorics, representation theory and finite field theory.
		
	\end{abstract}	

	\section{Introduction}
	
	For a prime power $q$, let $\mathbb{F}_q$ denote the finite field of order $q$. For $n \in \mathbb{N}$, let $\{e_1,e_2,...,e_n\}$ denote the standard basis for $\mathbb{F}_q^n$. Let $\sigma : \mathbb{F}_q^n \rightarrow \mathbb{F}_q^n$ be the linear map defined by $\sigma(\sum_{i=1}^{n}{x_i e_i}) = \sum_{i=1}^{n}{x_{i-1} e_i}$, where addition/subtraction in the index is modulo $n$. That is, $\sigma$ is the \textit{cyclic shift} operator which shifts each entry one place clockwise. Given a subspace $U \leq \mathbb{F}_q^n$ and a linear map $\alpha : \mathbb{F}_q^n \rightarrow \mathbb{F}_q^n$ we let $\alpha (U) = \{ \alpha (x) : x \in U\}$. In particular, for $r \in \{0,1,2,3,...,n-1\}$, $\sigma^r (U)$ is the subspace of $\mathbb{F}_q^n$ obtained by cyclically shifting the elements of $U$ precisely $r$ places clockwise. We call $\{ \sigma^r(U) : 0\leq r \leq n-1\}$ the family of \textit{cyclic shifts} of $U$.

	We say a subspace $U \leq \mathbb{F}_q^n$ is \textit{cyclically covering} if $\bigcup_{r=0}^{n-1}{\sigma^r (U)} = \mathbb{F}_q^n$. For a prime power $q$ and $n \in \mathbb{N}$, we define $h_q (n)$ to be the maximum possible codimension of a cyclically covering subspace of $\mathbb{F}_q^n$. The main purpose of this paper is to investigate the behaviour of the function $h_q : \mathbb{N} \rightarrow \mathbb{N}$, for various prime powers $q$.

	This problem is a natural generalisation of the following problem, posed by the first author. For $n \in \mathbb{N}$, define $V_n = \{x \in \mathbb{F}_2^n : \sum_{i=1}^{n}{x_i} = 0\}$, i.e.\ $V_n$ is the ${\mathbb{F}_2}$-vector space of binary strings with length $n$ and even Hamming weight. For an odd positive integer $n$, define $f(n)$ to be the maximum possible codimension of a subspace $W$ such that the union of the cyclic shifts of $W$ is equal to $V_n$. The first author asked in \cite[Problem 190]{cameronbcc} whether $f(n)$ tends to infinity as $n \to \infty$ (over odd integers $n$).

	We observe that $f(n) = h_2 (n)$ for all odd $n \in \mathbb{N}$. Indeed, take $W \leq V_n$ such that $V_n$ is equal to the union of the cyclic shifts of $W$. Then $W' := \text{Span}(W \cup \{11...1\})$ is a cyclically covering subspace of $\mathbb{F}_2^n$ with the same codimension as that of $W$ in $V_n$. Conversely, if $U \leq \mathbb{F}_2^n$ is a cyclically covering subspace, then the cyclic shifts of $U' := U \cap V_n$ cover $V_n$, and the codimension of $U'$ in $V_n$ is equal to the codimension of $U$ in $\mathbb{F}_2^n$.

	We remark that somewhat similar problems have been investigated before. In \cite{Luh}, for example, Luh shows that any vector space (finite or infinite) over $\mathbb{F}_q$ can be expressed as a union of $q+1$ proper subspaces, and that this expression is unique up to automorphisms of the vector space. In \cite{jamison}, Jamison determined, for each $0 < k < n$, the minimum number of $k$-flats that are required to cover $\mathbb{F}_q^n \setminus \{0\}$. (Here, a {\em $k$-flat} is a translate of a $k$-dimensional subspace.)

Our results have implications for a well-known conjecture of Isbell (and a generalisation thereof), as we now describe. (Indeed, this was the first author's original motivation for studying the above problem, though we also believe that the problem is natural in its own right.)

For $n \in \mathbb{N}$, we let $[n]: = \{1,2,\ldots,n\}$ denote the standard $n$-element set, and we write $S_n$ for the symmetric group on $[n]$. If $G \leq S_n$ is a permutation group, we say that $n$ is the {\em degree} of $G$, and we say that $G$ is {\em transitive} if for every $i,j \in [n]$, there exists $\sigma \in G$ such that $\sigma(i)=j$. If $X$ is a finite set, we write $\mathcal{P}(X)$ for the power-set of $X$. If $\mathcal{F} \subset \mathcal{P}([n])$, we say $\mathcal{F}$ is {\em intersecting} if any two sets in $\mathcal{F}$ have nonempty intersection, we say it is an {\em up-set} if whenever $S \in \mathcal{F}$ and $S \subset T$ we have $T \in \mathcal{F}$, and we say it is {\em antipodal} if for any $S \subset [n]$, $\mathcal{F}$ contains exactly one of $S$ and $[n] \setminus S$. If $\mathcal{F} \subset \mathcal{P}([n])$, we define its {\em automorphism group} by $\Aut(\mathcal{F}) : = \{\sigma \in S_n:\ \sigma(\mathcal{F}) = \mathcal{F}\}$, where $\sigma(\mathcal{F}): = \{\sigma(S):\ S \in \mathcal{F}\}$, and we say that $\mathcal{F}$ is {\em symmetric} if $\Aut(\mathcal{F})$ is a transitive subgroup of $S_n$.

Isbell \cite{isbell-1}, and later Frankl, Kantor and the first author \cite{cfk}, investigated the set
\begin{equation}\label{eq:A-defn} A := \{n \in \mathbb{N}: \text{there exists a symmetric intersecting family }\mathcal{F} \subset \mathcal{P}([n]) \text{ with } |\mathcal{F}|=2^{n-1}\}.\end{equation}
Since for any $S \subset [n]$, an intersecting family $\mathcal{F} \subset \mathcal{P}([n])$ contains at most one of $S$ and $[n] \setminus S$, we have $|\mathcal{F}| \leq 2^{n-1}$ for any intersecting $\mathcal{F}$. It is easy to see that an intersecting family $\mathcal{F} \subset \mathcal{P}([n])$ is maximal intersecting if and only if $|\mathcal{F}|=2^{n-1}$, and that a family $\mathcal{F} \subset \mathcal{P}([n])$ is maximal intersecting if and only if it is an antipodal up-set. Symmetric, antipodal up-sets arise naturally as the `winning sets' in $n$-player games where $n$ different players are choosing between two alternatives and their choices are aggregated according to some rule (with symmetry being a natural notion of the fairness of the rule), and Isbell \cite{isbell-1} was led to their study from problems in Social Choice Theory. (Indeed, Isbell termed a symmetric, antipodal up-set a {\em fair game}, though we do not use this terminology here, to avoid confusion with other notions of fair games.) Isbell \cite{isbell-2} observed that the set $A$ defined in (\ref{eq:A-defn}) is equal to the set of all positive integers $n$ for which there exists a transitive permutation group of degree $n$ having no fixed-point-free element of 2-power order. He conjectured that there exists a function $m:\{b \in \mathbb{N}: b \text{ odd}\} \to \mathbb{N}$ such that if $b \in \mathbb{N}$ is odd and $a \geq m(b)$, then $2^a \cdot b \notin A$. Isbell's conjecture remains open. Frankl, Kantor and the first author \cite{cfk} proved that for $b \in \{1,3\}$ one can take $m(1)=1$ and $m(3)=2$ (which is best possible). They also conjectured a generalisation of Isbell's conjecture, namely that if, for each prime $p$, we define $A_p$ to be the set of all positive integers $n$ for which there exists a transitive permutation group of degree $n$ having no fixed-point-free element of $p$-power order, then for each prime $p$, there exists a function $m_p :\{b \in \mathbb{N}:\ \text{gcd}(b,p)=1\} \to \mathbb{N}$ such that if $b \in \mathbb{N}$ is coprime to $p$ and $a \geq m_p(b)$, then $p^a \cdot b \notin A_p$. (Hereafter, we refer to this as the {\em generalised Isbell conjecture}.) For all primes $p$, the $p$-case of this conjecture remains open, although several related results have been proved; for example, using the Classification of Finite Simple Groups, Fein, Kantor and Schacher \cite{fks} proved that any transitive permutation group of degree $n>1$ contains a fixed-point-free element of $p$-power order for some prime $p$.

It is natural to ask for lower bounds on the function $m_p$ in the generalised Isbell conjecture. To obtain such a lower bound, it suffices to construct a transitive permutation group of degree $n=p^a \cdot b$, with $b$ coprime to $p$, containing no fixed-point-free element of $p$-power order,
and with $a$ large compared to $b$. One construction method
is to take the vector space $V=\mathbb{F}_p^b$, with $b$ coprime to $p$, and to take a subspace $W \leq V$ of smallest
possible dimension such that the cyclic shifts of $W$ (i.e., the images of $W$ under powers of the cyclic shift operator $\sigma$, defined above) cover $V$. Let $G: = V \rtimes C_b$ be the semidirect product of $V$ by the cyclic group $C_b = \langle \sigma \rangle$, and let $\iota: V \hookrightarrow G;\ v \mapsto (v,\text{Id})$ denote the natural inclusion map. Consider the permutation group $H$ induced by the left action of $G$ on the left cosets of $\iota(W)$ in $G$. It is easy to see that every element of $p$-power order in $H$ is induced by an element of $G$ of the form $(v,\text{Id})$ for some $v \in V$; such an element fixes some left coset of $\iota(W)$ in $G$, namely $\iota(\sigma^j(W))$, where $j \in \{0,1,\ldots,b-1\}$ is such that $v \in \sigma^j(W)$. Hence, any element of $H$ of $p$-power order has a fixed point. The degree of $H$ is $p^a \cdot b$, where $a:=\dim(V)-\dim(W) = h_p(b)$ is the codimension of $W$. This yields the following.

\begin{proposition}
\label{prop:isbell-connection}
For prime $p$ and each $b \in \mathbb{N}$ coprime to $p$, define $m_p(b) = \min\{c \in \mathbb{N}:\ p^a \cdot b \notin A_p \ \forall a \geq c\}$, with the usual convention that $m_p(b)=\infty$ if the set in question is empty. Then $m_p(b) > h_p(b)$ for all integers $b \in \mathbb{N}$ that are coprime to $p$.
\end{proposition}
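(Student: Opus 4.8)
The plan is to show that $p^{a}\cdot b \in A_p$ for $a := h_p(b)$. Once this is established, the proposition follows immediately: by the definition of $m_p(b)$, no $c \le h_p(b)$ can have the property that $p^{a'}\cdot b \notin A_p$ for every $a' \ge c$ (apply it with $a' = h_p(b)$), so $m_p(b) > h_p(b)$, the case $m_p(b) = \infty$ also satisfying the inequality. To produce a witness for $p^{a}\cdot b \in A_p$ I would carry out the construction outlined just before the statement and verify its two defining properties: transitivity, and the absence of fixed-point-free elements of $p$-power order.

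Concretely, set $V := \mathbb{F}_p^b$ and fix a cyclically covering subspace $W \le V$ of maximum codimension, so that $\operatorname{codim}(W) = a$ and $\bigcup_{j=0}^{b-1}\sigma^j(W) = V$. Since $\gcd(b,p) = 1$, the cyclic shift $\sigma$ acts on $V$ as a generator of $C_b$; form $G := V \rtimes C_b$ with this action, let $\iota : V \hookrightarrow G$, $v \mapsto (v,\mathrm{Id})$, and let $H \le \operatorname{Sym}(G/\iota(W))$ be the image of the left-multiplication action of $G$ on the left cosets of $\iota(W)$, with $\pi : G \to H$ the corresponding surjection. Then $H$ is transitive, since any coset action is, and its degree is $[G:\iota(W)] = (p^{b}\cdot b)/p^{\,b-a} = p^{a}\cdot b$. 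It remains to show that every element of $p$-power order in $H$ has a fixed point.

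For this I would first show that every $p$-element of $H$ lies in $\pi(\iota(V))$. Indeed, $\iota(V) \trianglelefteq G$ (it is the kernel of the projection $G \to C_b$) and has index $b$, which is coprime to $p$, so $\iota(V)$ is the unique Sylow $p$-subgroup of $G$; moreover $\ker\pi$, being the core of $\iota(W)$ in $G$, is contained in $\iota(W) \le \iota(V)$ and is therefore a $p$-group. A short order count then identifies $\pi(\iota(V))$ as a normal Sylow $p$-subgroup of $H$, hence the unique Sylow $p$-subgroup, so every $p$-element of $H$ is of the form $\pi(\iota(v))$ for some $v \in V$. Finally, a direct computation in $G$ gives $(0,\sigma^{j})^{-1}(v,\mathrm{Id})(0,\sigma^{j}) = \iota(\sigma^{-j}(v))$, which lies in $\iota(W)$ exactly when $v \in \sigma^{j}(W)$; choosing $j$ with $v \in \sigma^{j}(W)$, which is possible because $W$ is cyclically covering, shows that $\pi(\iota(v))$ fixes the left coset of $\iota(W)$ containing $(0,\sigma^{j})$. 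This establishes both required properties of $H$ and hence the proposition. I expect the only genuine content to be the Sylow bookkeeping that identifies $\pi(\iota(V))$ as a Sylow subgroup of $H$, together with the conjugation identity above; everything else is formal.
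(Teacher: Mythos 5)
Your proof follows the same construction the paper gives in the paragraph preceding the proposition, and it is correct: you build $G = V \rtimes C_b$, act on cosets of $\iota(W)$, and use the cyclic-covering hypothesis to produce fixed points for $p$-elements. The one place you are more careful than the paper is worth noting: you justify the claim that every $p$-element of $H$ lies in $\pi(\iota(V))$ via the Sylow argument (which the paper calls ``easy to see''), and you correctly identify the fixed coset as $(0,\sigma^j)\iota(W)$ rather than $\iota(\sigma^j(W))$ --- the latter, as written in the paper, is not actually a left coset of $\iota(W)$, though the intended argument is clearly the same.
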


This method led the first author to the problem considered in this paper. As we outline later, a construction due to Spiga \cite{spiga-thesis} (building on the work of Suzuki in \cite{suzuki}) gives a lower bound for $m_p(b)$ which is better than ours for certain integers $b$; on the other hand, our construction is simpler and works for certain natural infinite sequences of integers where Spiga's method does not apply.

The rest of this paper is structured as follows. In Section 2, we prove some simple upper and lower bounds on the function $h_q(n)$. In Section 3, we prove our main results, which are lower bounds on $h_q(n)$ that are sharp for infinitely many values of $n$. In Section 4, we generalise the problem considered here to arbitrary group representations, and we prove some straightforward upper and lower bounds for the general problem. In Section 5, we exhibit, for each prime power $q$, infinitely many values of $n$ for which $h_q(n)=0$; this result is obtained as a special case of a result for arbitrary group representations, where the only covering subspace is the whole space. 

	\section{Simple upper and lower bounds}

We first recall a straightforward lower bound on $h_2(n)$, due to the first author (unpublished). We give a proof for completeness.

	\begin{lemma}
	\label{lemma:simple}
		For odd positive integers $n > 3$, we have $h_2 (n) \geq 2$.
	\end{lemma}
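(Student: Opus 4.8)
The goal is to exhibit, for every odd $n>3$, a cyclically covering subspace $U\le\mathbb F_2^n$ of codimension $2$; equivalently (by the translation remarked in the introduction), a codimension-$2$ subspace $W\le V_n$ whose cyclic shifts cover $V_n$. I would work directly in $\mathbb F_2^n$ and define $U$ by two linear constraints: the parity constraint $\sum_{i=1}^n x_i = 0$ (so $U\le V_n$, forced since $n$ is odd and the all-ones vector must lie in some shift of $U$, hence in $U$ itself as $U$ is shift-stable under the relevant count), together with one further constraint of the form $\sum_{i\in S} x_i = 0$ for a carefully chosen $S\subseteq[n]$. The natural candidate is a ``short interval'' constraint such as $x_1+x_2=0$, or a consecutive-pair sum; the point is that the family of shifts $\{\sigma^r(U)\}$ then imposes the family of constraints ``$x_{r+1}+x_{r+2}=0$'' (indices mod $n$), and covering $V_n$ amounts to: every even-weight vector $x\in\mathbb F_2^n$ has two cyclically adjacent equal coordinates. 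First I would check that $\dim U = n-2$: this needs the two functionals $\mathbf 1$ and $e_1^*+e_2^*$ (say) to be linearly independent over $\mathbb F_2$, which holds precisely because $n\neq 2$, and that $U\neq V_n$, i.e.\ the second functional is not identically zero on $V_n$, which holds for $n>2$.

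The core step is the combinatorial claim: for odd $n>3$, every $x\in V_n$ has some $r$ with $x_{r+1}=x_{r+2}$. Suppose not; then $x$ alternates, $x_{i+1}\neq x_i$ for all $i$ mod $n$, which around a cycle of odd length $n$ is impossible over $\mathbb F_2$ (an odd cycle is not properly $2$-colourable). Hence such an $r$ exists, so $x\in\sigma^{-r}(U)$ (after transcribing the shift direction correctly — this bookkeeping with $\sigma$ versus $\sigma^{-1}$ and clockwise/anticlockwise is the only place care is needed), and therefore $\bigcup_r \sigma^r(U)\supseteq V_n$. Combined with the observation that the all-ones vector $\mathbf 1$ is not in $V_n$ (as $n$ is odd) but does lie in $U$? — wait: $\mathbf 1\notin V_n$ since $n$ is odd, and $\mathbf 1\notin U$ since $U\le V_n$; but $\mathbf 1$ has all adjacent coordinates equal, so $\mathbf 1\in U$ would be needed for covering. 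This forces me to instead \emph{not} impose the parity constraint and instead take $U = \{x : x_1+x_2 = 0,\ x_2 + x_3 = 0\}$ or a similar pair of local constraints; I would then redo the dimension count ($\dim U = n-2$ as long as the two functionals $e_1^*+e_2^*,\,e_2^*+e_3^*$ are independent, true for all $n\ge 3$) and re-verify covering: $x\in\sigma^r(U)$ iff $x_{r+1}=x_{r+2}=x_{r+3}$, i.e.\ $x$ has three consecutive equal coordinates. So the real combinatorial claim becomes: for odd $n>3$, every $x\in\mathbb F_2^n$ has three cyclically consecutive equal entries.

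That claim I would prove by contradiction: if no three consecutive entries are equal, then in the cyclic binary string $x$ every maximal run of equal symbols has length $\le 2$, so reading around the cycle we see a pattern built from blocks ``$0$'', ``$00$'', ``$1$'', ``$11$'' with no two consecutive blocks of the same symbol. A short parity/counting argument on the run-length sequence then shows $n$ must be even unless one allows the degenerate all-equal string (which is excluded for $n>2$), giving the contradiction for odd $n>3$; I expect the base irregularities at $n=3$ (where the all-equal string $111$ has three consecutive equal entries trivially, but every string does, so actually $h_2(3)\ge\text{something}$ — consistent with the hypothesis $n>3$ being there to make the codimension exactly $2$ rather than more, and to rule out $n=3$ where $e_1^*+e_2^*$ and $e_2^*+e_3^*$ plus the cyclic relation interact) to be handled by the explicit exclusion. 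The main obstacle is getting the direction and indexing of $\sigma$ exactly right so that ``$x\in\sigma^r(U)$'' translates to the intended local condition on $x$, and choosing the defining constraints for $U$ so that (i) the codimension is exactly $2$ for all odd $n>3$ and (ii) the covering condition reduces to a clean, manifestly-true statement about cyclic binary strings of odd length. Once the constraints are pinned down, both the dimension count and the covering verification are routine.
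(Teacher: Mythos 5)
Your second construction fails, and the combinatorial claim it rests on is false. Take $U=\{x\in\mathbb F_2^n : x_1=x_2=x_3\}$, so that $x\in\sigma^r(U)$ iff $x_{r+1}=x_{r+2}=x_{r+3}$; covering would then require every $x\in\mathbb F_2^n$ (odd $n>3$) to have three cyclically consecutive equal entries. But $x=01010\in\mathbb F_2^5$ has all cyclic runs of length $\le 2$, so none of its shifts lands in $U$ --- and the paper notes that $h_2(5)=2$, so the failure for $n=5$ is not an edge case to be excused. Your parity argument on run-lengths does not rescue this: if there are $a$ cyclic runs of length $1$ and $b$ of length $2$ with no longer runs, the alternation of symbols forces $a+b$ even, and $n=a+2b=(a+b)+b$; odd $n$ simply forces $b$ (and hence $a$) odd, with no contradiction (e.g.\ $a=3,b=1,n=5$). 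So the ``clean, manifestly true'' statement about cyclic binary strings you were aiming for does not exist in this form.

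The deeper issue is the assumption that $U$ can be cut out by two \emph{local} linear constraints, with covering equivalent to ``some length-$3$ window is constant.'' The paper's $U$ is $\mathrm{Span}\{\mathbf 1,\ e_1+e_3,\ e_4+e_5,\ e_5+e_6,\ldots,e_{n-1}+e_n\}$; unwinding, its defining functionals are $x_1+x_3$ and $\sum_{i\ne 2}x_i$ --- the second is global, not local. The covering argument is correspondingly two-layered: $\mathbf 1\in U$ reduces the problem to even-weight vectors; an even-weight $x\in\mathbb F_2^n$ with $n$ odd has an odd number of zeros, so some maximal cyclic run of zeros has odd length, so $x$ contains a cyclic window $000$ or $101$; after shifting, the first three coordinates are $000$ or $101$, and the tail is absorbed by $\mathrm{Span}\{e_4+e_5,\ldots,e_{n-1}+e_n\}$ with the $101$ case handled by the extra generator $e_1+e_3$. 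Your instinct to look for a short repeated local pattern was on the right track, but the needed pattern is ``$000$ or $101$ among the zeros,'' not ``three consecutive equal entries,'' and implementing it requires the extra generators $\mathbf 1$ and $e_1+e_3$, not just two intersecting hyperplanes of adjacent-difference type.
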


	\begin{proof}

		Let $U = \text{Span}(S)$, where

		\begin{equation*}
			S = \{1111111...11, 1010000...00, 0001100...00,0000110...00,0000011...00,....,0000000...11\}
		\end{equation*}
Since $S$ is a linearly independent set of size $n-2$, we have $\text{codim}(U) = 2$. We claim that $U$ is a cyclically covering subspace of $\mathbb{F}_2^n$.  Observe that the last $n-4$ elements of $S$ are a basis for the subspace $\{x \in V_n : x_1 = x_2 = x_3 = 0\}$. First let $x \in \mathbb{F}_2^n$ have even Hamming weight. Since $x$ has an odd number of zeros, it has a (cyclic) interval of consecutive zeros, with odd length. In particular, it contains a (cyclic) interval of the form $000$ or $101$. By cycling $x$, we may assume that $x_1x_2 x_3 = 000$ or $x_1x_2x_3 = 101$. In the first case, $x$ lies in the span of the last $n-4$ elements of $S$; in the second, $x+101000\ldots0$ lies in the span of the last $n-4$ elements of $S$, so we are done. Now let $x \in \mathbb{F}_2^n$ have odd Hamming weight. Then $x+11\ldots1$ has even Hamming weight, and $11\ldots 1 \in S$, so again we are done.
\end{proof}

It is easy to check that $h_2(3)=1$, and therefore the assumption $n >3$ in Lemma \ref{lemma:simple} is necessary. Equality holds in Lemma \ref{lemma:simple} for $n=5$.
\bigskip
		
We next give a rather crude `product' bound.
\begin{lemma}
\label{lma:products}
If $q$ is a prime power, and $n,m \in \mathbb{N}$, then
		
\begin{equation*}
h_q(nm) \geq \max\{h_q (n),h_q (m)\}.
\end{equation*}
\end{lemma}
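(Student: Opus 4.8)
The plan is to take a cyclically covering subspace of $\mathbb{F}_q^n$ of codimension $h_q(n)$ and "blow it up" to a cyclically covering subspace of $\mathbb{F}_q^{nm}$ of the same codimension; by symmetry in $n$ and $m$ this gives the bound. Think of $\mathbb{F}_q^{nm}$ as indexed by $\mathbb{Z}/nm\mathbb{Z}$, with cyclic shift $\sigma_{nm}$, and consider the natural surjection $\pi:\mathbb{F}_q^{nm}\to\mathbb{F}_q^n$ which "folds" coordinate $i\pmod{nm}$ onto coordinate $i\pmod n$ — that is, $\pi(x)_j=\sum_{i\equiv j\ (\mathrm{mod}\ n)} x_i$ (equivalently, $\mathbb{F}_q^{nm}=\mathbb{F}_q[\mathbb{Z}/nm\mathbb{Z}]$ and $\pi$ is reduction modulo the ideal corresponding to the subgroup $n\mathbb{Z}/nm\mathbb{Z}$, i.e. the quotient group-algebra map onto $\mathbb{F}_q[\mathbb{Z}/n\mathbb{Z}]$). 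The key compatibility is $\pi\circ\sigma_{nm}=\sigma_n\circ\pi$, which is immediate from the definitions.

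Now let $U\le\mathbb{F}_q^n$ be cyclically covering with $\mathrm{codim}(U)=h_q(n)$, and set $\tilde U:=\pi^{-1}(U)\le\mathbb{F}_q^{nm}$. Since $\pi$ is surjective, $\mathrm{codim}(\tilde U)=\mathrm{codim}(U)=h_q(n)$. To see $\tilde U$ is cyclically covering: given $x\in\mathbb{F}_q^{nm}$, we have $\pi(x)\in\mathbb{F}_q^n$, so there is $r\in\{0,\dots,n-1\}$ with $\sigma_n^{r}(\pi(x))\in U$; then $\pi(\sigma_{nm}^{r}(x))=\sigma_n^{r}(\pi(x))\in U$, hence $\sigma_{nm}^{r}(x)\in\pi^{-1}(U)=\tilde U$, i.e. $x\in\sigma_{nm}^{-r}(\tilde U)$. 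Thus the cyclic shifts of $\tilde U$ cover $\mathbb{F}_q^{nm}$, giving $h_q(nm)\ge h_q(n)$. Swapping the roles of $n$ and $m$ (using instead reduction modulo $m$) yields $h_q(nm)\ge h_q(m)$, and combining gives the claimed inequality.

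The only point requiring a little care is checking that $\pi$ really does intertwine the two shift operators with the correct index convention, and that $\pi$ is surjective (clear, since e.g. $\pi(e_i)=e_{i\bmod n}$). There is no genuine obstacle here — the argument is purely formal once the folding map is set up — so the "hard part" is merely pinning down the right map $\pi$; an alternative, equivalent packaging is to use the block map $\psi:\mathbb{F}_q^{nm}\to\mathbb{F}_q^n$ that sums the $m$ coordinates in positions $j,\,j+n,\,j+2n,\dots$ and note it has the same intertwining property. Either formulation works, and I would present whichever makes the identity $\pi\sigma_{nm}=\sigma_n\pi$ most transparent.
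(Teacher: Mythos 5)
Your proposal is correct, and it takes a genuinely different (and arguably cleaner) route than the paper's. The paper constructs the covering subspace $V\le\mathbb F_q^{nm}$ explicitly: it places a basis of $U$ into the coordinates $m,2m,\dots,nm$ (with zeros elsewhere), throws in every standard basis vector $e_j$ with $m\nmid j$, and then covers a given $x$ by shifting $mr$ places, using the projection $\psi(x)=(x(m),\dots,x(nm))$, which intertwines $\sigma_{nm}^m$ with $\sigma_n$ (i.e.\ it implicitly uses the embedding $C_n\hookrightarrow C_{nm}$, $1\mapsto m$). Your argument instead uses the surjection $C_{nm}\twoheadrightarrow C_n$: the group-algebra quotient $\pi:\mathbb F_q[\mathbb Z/nm\mathbb Z]\to\mathbb F_q[\mathbb Z/n\mathbb Z]$, which intertwines the single-step shifts $\pi\sigma_{nm}=\sigma_n\pi$, and takes $\tilde U=\pi^{-1}(U)$. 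Surjectivity of $\pi$ immediately gives $\operatorname{codim}\tilde U=\operatorname{codim}U$, and the covering property is a one-line diagram chase, with no need to build a basis by hand or to shift only by multiples of $m$. Your $\tilde U$ is a different subspace from the paper's $V$ (e.g.\ your kernel is $\{x:\sum_{i\equiv j\ (n)}x_i=0\ \forall j\}$ rather than $\operatorname{span}\{e_j:m\nmid j\}$), but both have the same codimension and both work. What the paper's approach buys is explicitness of a covering subspace; what yours buys is that the codimension count and the equivariance are both immediate from general nonsense about quotient modules, so there is essentially nothing to verify beyond the identity $\pi\sigma_{nm}=\sigma_n\pi$, which you correctly observe is automatic since $\pi$ is a ring homomorphism of group algebras.
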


\begin{proof}
Let $q$ be a prime power. If $v$ is a vector in $\mathbb{F}_q^N$ for some $N \in \mathbb{N}$, let us write $v(j)$ for the $j$th component of $v$ (i.e., $v = \sum_{i=1}^{N}{v(i) e_i}$ with respect to the standard basis $\{e_1,e_2,...,e_N\}$).
		
Let $n,m \in \mathbb{N}$. Without loss of generality, we may assume that $h_q (n) \geq h_q (m)$. Let $U \leq \mathbb{F}_q^n$ be a cyclically covering subspace of $\mathbb{F}_q^n$, with $\textrm{codim}(U) = h_q (n)$. Let $k = h_q(n)$. Let $\{u_1,...,u_{n-k}\}$ be a basis for $U$. For each $i \in [n-k]$, let
\begin{equation*}
x_i = (\underbrace{0,0,...,0,u_i(1)}_m ,\underbrace{0,0,...,0,u_i(2)}_m ,...,\underbrace{0,0,...,0,u_i(n)}_m) \in \mathbb{F}_q^{nm}.
\end{equation*}
Let
\begin{equation*}
S = \{x_i:\ i \in [n-k]\} \cup \{e_j:\ j \in [nm],\ m \nmid j\},
\end{equation*}
where $e_j$ is the $j$th standard basis vector in $\mathbb{F}_q^{nm}$, and let $V = \textrm{Span}(S)$. Then $|S| = (n-k) + (nm - n) = nm-k$, and $S$ is linearly independent, so $\textrm{codim}(V)=k$. We claim that $V$ cyclically covers $\mathbb{F}_q^{nm}$. Indeed, if $x \in \mathbb{F}_q^{nm}$, then consider the projection of $x$ onto the subspace spanned by $\{e_j:\ m \mid j\}$, i.e.\
\begin{equation*}
\pi(x) := (\underbrace{0,0,\ldots,0,x(m)}_m,\underbrace{0,0,\ldots,0,x(2m)}_m,\underbrace{0,0,\ldots,0,x(3m)}_m,\ldots \underbrace{0,0,\ldots,0,x(nm)}_m) \in \mathbb{F}_q^{nm},
\end{equation*}
and let
\begin{equation*}
\psi(x) := (x(m),x(2m),x(3m),...,x(nm)) \in \mathbb{F}_q ^n
\end{equation*}
be the vector obtained from $\pi(x)$ by deleting the coordinates that are not multiples of $m$. Since $U$ cyclically covers $\mathbb{F}_q^n$, there exists $r \in [n-1]$ such that $\sigma^{r}(\psi(x)) \in U$. It follows that $\sigma^{mr}(\pi(x)) \in V$, and therefore $\sigma^{mr}(x) \in V$, since $S$ contains every unit vector $e_j$ such that $m \nmid j$. Hence, $V$ is cyclically covering, as claimed, and therefore $h_q (nm) \geq \textrm{codim}(V) = k = h_q (n)$, proving the lemma.

	\end{proof}

We now give a straightforward upper bound for $h_q(n)$, for all $n \in \mathbb{N}$.

\begin{lemma}
\label{lemma:simple-upper}
For $q$ a prime power and $n \in \mathbb{N}$, we have $h_q (n) \leq \left \lfloor {\log_{q}(n)} \right \rfloor$.
\end{lemma}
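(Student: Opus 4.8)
The plan is to use a crude union (counting) bound. Suppose $U \leq \mathbb{F}_q^n$ is a cyclically covering subspace whose codimension equals $h_q(n)$; write $k := h_q(n)$, so $|U| = q^{n-k}$. Since $\sigma$ is a linear automorphism of $\mathbb{F}_q^n$, each cyclic shift $\sigma^r(U)$ is again a subspace of dimension $n-k$, so $|\sigma^r(U)| = q^{n-k}$ for every $r \in \{0,1,\ldots,n-1\}$.

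First I would invoke the covering hypothesis: $\bigcup_{r=0}^{n-1} \sigma^r(U) = \mathbb{F}_q^n$. Taking cardinalities and applying the union bound gives
\begin{equation*}
q^n = \left| \bigcup_{r=0}^{n-1} \sigma^r(U) \right| \leq \sum_{r=0}^{n-1} |\sigma^r(U)| = n \, q^{n-k}.
\end{equation*}
Dividing through by $q^{n-k}$ yields $q^k \leq n$, hence $k \leq \log_q(n)$. Since $k = h_q(n)$ is a nonnegative integer, this forces $k \leq \lfloor \log_q(n) \rfloor$, which is the claimed bound.

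There is no real obstacle here — the argument is a one-line counting estimate. The only point worth a remark is that the bound used is wasteful: every $\sigma^r(U)$ contains $0$, so one could replace the union bound by $q^n \leq 1 + n(q^{n-k}-1)$; however, this still only gives $q^k \leq n + (n-1)/(q^{n-k}) $, i.e.\ essentially $q^k \le n$ after taking integer parts, so it does not improve the stated conclusion. I would therefore just present the crude version for brevity.
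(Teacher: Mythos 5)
Your proof is correct. The paper reaches the same key inequality $|U| \geq q^n/n$ by a dual counting argument: the orbits of $\langle \sigma \rangle$ on $\mathbb{F}_q^n$ each have size at most $n$, so there are at least $q^n/n$ of them, and a cyclically covering $U$ must meet every orbit. You instead apply a union bound directly to $\bigcup_{r=0}^{n-1}\sigma^r(U) = \mathbb{F}_q^n$. These are two sides of the same double count (pairs $(r,x)$ with $\sigma^{-r}(x)\in U$), so the argument is essentially the paper's.
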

\begin{proof}

Let $U \leq \mathbb{F}_q^n$ be a cyclically covering subspace. The cyclic group $\left \langle \sigma \right \rangle = \{ \text{Id}, \sigma ,\sigma^2,\ldots \sigma^{n-1}\}$ acts on $\mathbb{F}_q^n$ by cyclically shifting vectors. The orbits of this group action partition $\mathbb{F}_q^n$, and each orbit contains at most $n$ vectors, so there are at least $q^n / n$ orbits. Since $U$ is cyclically covering, it intersects each orbit, and therefore $|U| \geq q^n /n$. Hence, $\text{dim}(U) = \log_q(|U|) \geq n - \log_q (n)$, so $\text{codim}(U) \leq \log_q (n)$, proving the lemma.
\end{proof}

Our main results show that for each prime power $q$, the simple upper bound in Lemma \ref{lemma:simple-upper} is sharp for infinitely many values of $n$. The proofs of these results occupy most of the next section.

\section{Our main results}

Our first main result is as follows.

\begin{theorem}
\label{thm:main}
If $q$ is a prime power and $d \in \mathbb{N}$, then

\begin{equation*}
h_q(q^d -1) = d-1 = \left \lfloor \log_q (q^d - 1) \right \rfloor.
\end{equation*}
\end{theorem}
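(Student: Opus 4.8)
The plan is as follows. The upper bound $h_q(q^d-1)\le d-1$ is immediate from Lemma~\ref{lemma:simple-upper}: since $q^{d-1}\le q^d-1<q^d$, we have $\lfloor\log_q(q^d-1)\rfloor=d-1$. So the substance of the theorem is the matching lower bound, i.e.\ the construction of a cyclically covering subspace of $\mathbb{F}_q^{\,n}$, with $n=q^d-1$, of codimension exactly $d-1$.

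The key idea is to exhibit $\mathbb{F}_{q^d}$ as a quotient of $\mathbb{F}_q^{\,n}$ (viewed, via $e_i\mapsto x^{i-1}$, as the module $\mathbb{F}_q[x]/(x^n-1)$ on which $\sigma$ acts as multiplication by $x$) on which $\sigma$ becomes scalar multiplication by a primitive element. Concretely, I would fix a generator $\alpha$ of the cyclic group $\mathbb{F}_{q^d}^{\times}$, which has order $n$, and define the $\mathbb{F}_q$-linear map $\rho:\mathbb{F}_q^{\,n}\to\mathbb{F}_{q^d}$ by $\rho\bigl(\sum_{i=1}^n x_i e_i\bigr)=\sum_{i=1}^n x_i\alpha^{i-1}$. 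Since $\alpha$ generates $\mathbb{F}_{q^d}$ over $\mathbb{F}_q$, its minimal polynomial has degree $d$, so $1,\alpha,\dots,\alpha^{d-1}$ is an $\mathbb{F}_q$-basis of $\mathbb{F}_{q^d}$; hence $\rho$ is surjective and $\dim\ker\rho=n-d$. Using $\alpha^n=\alpha^{q^d-1}=1$, a one-line index computation gives the intertwining identity $\rho(\sigma v)=\alpha\,\rho(v)$ for all $v\in\mathbb{F}_q^{\,n}$ (equivalently, $\rho$ is the map obtained from the quotient $\mathbb{F}_q[x]/(x^n-1)\twoheadrightarrow\mathbb{F}_q[x]/(m_\alpha(x))\cong\mathbb{F}_{q^d}$, where $m_\alpha$ is an irreducible factor of $x^n-1$ of degree $d$).

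Now take $U:=\rho^{-1}(\mathbb{F}_q)$, the preimage of the prime-power subfield $\mathbb{F}_q\subseteq\mathbb{F}_{q^d}$. Then $\dim U=\dim\ker\rho+1=n-d+1$, so $U$ has codimension $d-1$. To see that $U$ is cyclically covering, note that the intertwining identity gives $\sigma^r(U)=\{v:\rho(v)\in\alpha^r\mathbb{F}_q\}$ for each $r$, whence $\bigcup_{r=0}^{n-1}\sigma^r(U)=\rho^{-1}\bigl(\bigcup_{r=0}^{n-1}\alpha^r\mathbb{F}_q\bigr)$. But $\{\alpha^r:0\le r\le n-1\}=\mathbb{F}_{q^d}^{\times}$, so every nonzero $y\in\mathbb{F}_{q^d}$ lies in $\alpha^r\mathbb{F}_q$ for suitable $r$, and $0\in\mathbb{F}_q$; thus $\bigcup_{r=0}^{n-1}\alpha^r\mathbb{F}_q=\mathbb{F}_{q^d}$. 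Since $\rho$ is surjective, $\bigcup_{r=0}^{n-1}\sigma^r(U)=\rho^{-1}(\mathbb{F}_{q^d})=\mathbb{F}_q^{\,n}$, so $U$ is cyclically covering and $h_q(q^d-1)\ge d-1$, completing the proof.

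The only real obstacle is spotting the right reformulation: once one passes to the quotient $\mathbb{F}_{q^d}$, on which $\sigma$ acts as multiplication by a primitive element, the covering requirement collapses to the triviality that the $\mathbb{F}_q$-lines through the origin cover $\mathbb{F}_{q^d}$. The remaining points — surjectivity of $\rho$, the identity $\rho(\sigma v)=\alpha\rho(v)$ (which just needs $\alpha^n=1$ and some index bookkeeping), and the dimension count — are all routine.
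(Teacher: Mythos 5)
Your proof is correct, and after unwinding the packaging it rests on the same core construction as the paper's: an irreducible degree-$d$ factor of $X^n-1$ over $\mathbb{F}_q$ and the resulting field $\mathbb{F}_{q^d}$ on which $\sigma$ acts by multiplication by a primitive $(q^d-1)$-th root of unity. The difference is presentational but genuinely cleaner in two places. First, the paper decomposes $\mathbb{F}_q[X]/\langle X^n-1\rangle$ into all its CRT summands $V_1\oplus\cdots\oplus V_N$ and then assembles $U'=U\oplus\bigoplus_{i\ge2}V_i$; you avoid the full decomposition by working with the single quotient $\rho:\mathbb{F}_q^n\twoheadrightarrow\mathbb{F}_{q^d}$ and taking $U=\rho^{-1}(\mathbb{F}_q)$, which absorbs $\ker\rho=\bigoplus_{i\ge2}V_i$ automatically. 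Second, the paper proves covering of $V_1$ by a direct orbit-size computation (showing $X^m u\neq u$ for $1\le m\le n-1$ and then counting $|V_1\setminus\{0\}|=n$); your intertwining identity $\rho(\sigma v)=\alpha\rho(v)$ reduces the covering statement to the triviality that the $\mathbb{F}_q$-lines $\alpha^r\mathbb{F}_q$ exhaust $\mathbb{F}_{q^d}$. Both arguments are short; yours trades a small amount of finite-field bookkeeping (the index shift in the intertwining identity, which you correctly note uses $\alpha^n=1$) for avoiding the orbit count, and has the pedagogical advantage of making the role of $\mathbb{F}_{q^d}^\times$ explicit rather than implicit.
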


Observe that the upper bound $h_q(q^d -1) \leq d-1$ is immediate from Lemma \ref{lemma:simple-upper}. Our proof of the lower bound $h_q(q^d -1) \geq d-1$ requires some standard facts from the Galois theory of finite fields, which we now briefly recall; the reader is referred to \cite{LN} for more background.

For a prime power $q$, we write $\overline{\mathbb{F}}_q$ for the algebraic closure of $\mathbb{F}_q$. We write $F_q:\overline{\mathbb{F}}_q \to \overline{\mathbb{F}}_q;\ x \mapsto x^q$ for the Frobenius automorphism of $\overline{\mathbb{F}}_q$. For $\omega \in \overline{\mathbb{F}}_q$, we call $\omega,F_q(\omega), F_q^2 (\omega), F_q^3(\omega),\ldots$ the {\em Galois conjugates} of $\omega$; if $\omega \in \overline{\mathbb{F}}_q$ is a root of some polynomial $f \in \mathbb{F}_q[X]$, then all the Galois conjugates of $\omega$ are also roots of $f$. Since any element $\omega \in \overline{\mathbb{F}}_q$ is the root of some polynomial in $\mathbb{F}_q[X]$, and all of the Galois conjugates of $\omega$ are roots of this polynomial, any $\omega \in \overline{\mathbb{F}}_q$ has only finitely many Galois conjugates. If $\omega \in \overline{\mathbb{F}}_q$, the \textit{minimal polynomial} of $\omega$ over $\mathbb{F}_q$ is the unique non-zero, monic polynomial in $\mathbb{F}_q[X]$ of minimal degree, that has $\omega$ as a root.

We will make repeated use of the following well-known fact (see for example Theorem 3.33 in \cite{LN}).
\begin{proposition}
\label{prop:min-poly}
Let $q$ be a prime power and let $\omega \in \overline{\mathbb{F}}_q$. Let $\omega,\omega^q,\ldots,\omega^{q^{t-1}}$ be the distinct Galois conjugates of $\omega$. Then
$$f(X) = \prod_{i=1}^{t} (X-\omega^{q^{t-1}})$$
is the minimal polynomial of $\omega$.
\end{proposition}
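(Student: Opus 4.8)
The plan is to prove Proposition~\ref{prop:min-poly}, which identifies the minimal polynomial of $\omega \in \overline{\mathbb{F}}_q$ as the product $f(X) = \prod_{i=0}^{t-1}(X - \omega^{q^i})$ over the distinct Galois conjugates of $\omega$, where $t$ is the number of such conjugates (note the product should run over the $t$ distinct conjugates, i.e. $i = 0, 1, \ldots, t-1$). The strategy has three parts: first, show $f$ has coefficients in $\mathbb{F}_q$; second, show $f$ is irreducible over $\mathbb{F}_q$; third, deduce it is \emph{the} minimal polynomial by monic-ness and irreducibility, since any monic irreducible polynomial having $\omega$ as a root is the minimal polynomial.

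For the first part, I would argue that applying the Frobenius automorphism $F_q$ to $f$ coefficientwise permutes the roots: since $F_q$ sends $\omega^{q^i}$ to $\omega^{q^{i+1}}$, and the set $\{\omega, \omega^q, \ldots, \omega^{q^{t-1}}\}$ is precisely the (finite) orbit of $\omega$ under $F_q$ — here one uses that $\omega^{q^t} = \omega$, which holds because once the list of conjugates repeats, it must cycle back to $\omega$ — the automorphism $F_q$ merely cyclically permutes the linear factors $(X - \omega^{q^i})$ and hence fixes $f$. A polynomial in $\overline{\mathbb{F}}_q[X]$ fixed by $F_q$ (applied to coefficients) has all its coefficients fixed by $F_q$, and the fixed field of $F_q$ is exactly $\mathbb{F}_q$; therefore $f \in \mathbb{F}_q[X]$. (One should first verify that $\omega^{q^t} = \omega$: since $\omega$ has only finitely many Galois conjugates, the sequence $\omega, \omega^q, \omega^{q^2}, \ldots$ is eventually periodic, and since $x \mapsto x^q$ is injective on $\overline{\mathbb{F}}_q$, it is purely periodic, so $\omega^{q^t} = \omega$ where $t$ is the period, which equals the number of distinct conjugates.)

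For the irreducibility, suppose $g \in \mathbb{F}_q[X]$ is a nonconstant monic factor of $f$. Then $g$ has some root among the $\omega^{q^i}$; but any root of $g \in \mathbb{F}_q[X]$ has all of its Galois conjugates as roots of $g$ too, and the Galois conjugates of $\omega^{q^i}$ are exactly $\{\omega, \omega^q, \ldots, \omega^{q^{t-1}}\}$ again (the orbit is the same regardless of which element we start from). Hence $g$ is divisible by every $(X - \omega^{q^i})$, so $\deg g \geq t = \deg f$, forcing $g = f$. Thus $f$ is irreducible over $\mathbb{F}_q$. Finally, $f$ is monic, irreducible, lies in $\mathbb{F}_q[X]$, and has $\omega$ as a root, so $f$ is the minimal polynomial of $\omega$ over $\mathbb{F}_q$.

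I do not anticipate a genuine obstacle here — this is a classical fact — but the step requiring the most care is the claim that the orbit $\{\omega, \omega^q, \ldots, \omega^{q^{t-1}}\}$ is \emph{closed} under Frobenius, i.e. that $\omega^{q^t} = \omega$ rather than merely $\omega^{q^t}$ lying somewhere in the sequence. This is where injectivity of the Frobenius map on $\overline{\mathbb{F}}_q$ (it is a field automorphism) is essential, ruling out a "$\rho$-shaped" pre-periodic tail. Once that is established, everything else is bookkeeping with the fixed field of $F_q$ and the standard fact that roots of an $\mathbb{F}_q$-polynomial are closed under taking Galois conjugates. Since the excerpt explicitly offers this as a citable result (Theorem~3.33 in \cite{LN}), a short self-contained argument along these lines, or simply a reference, suffices.
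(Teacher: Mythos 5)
Your proof is correct and is essentially the standard argument that the paper merely cites (Theorem~3.33 of Lidl--Niederreiter) rather than reproving; it also matches the proof sketched in the paper's commented-out appendix, whose two ingredients are exactly yours---Frobenius-invariance of the set of conjugates forces $f\in\mathbb{F}_q[X]$, and closure of the root set of any $\mathbb{F}_q$-polynomial under Galois conjugation forces minimal degree. The only minor structural difference is that you take a small detour through irreducibility of $f$ and then invoke the fact that a monic irreducible polynomial with $\omega$ as a root is the minimal polynomial, whereas the paper's appendix shows minimal degree directly; your extra care in verifying $\omega^{q^t}=\omega$ via injectivity of Frobenius (ruling out a pre-periodic tail) is a correct and worthwhile point.
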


We are now ready to prove Theorem \ref{thm:main}.

\begin{proof}[Proof of Theorem \ref{thm:main}]
Let $q$ be a prime power, let $d \in \mathbb{N}$ and let $n = q^d - 1$. We identify $\mathbb{F}_q^n$ and $\mathbb{F}_q [X]/\left \langle X^n -1 \right \rangle$ via the linear isomorphism taking $v \in \mathbb{F}_q^n$ to the polynomial $\sum_{i=1}^{n} {v(i) X^{i-1}}$. The action of $\sigma$ on $\mathbb{F}_q^n$ then corresponds to multiplication by $X$ in $\mathbb{F}_q [X]/\left \langle X^n -1 \right \rangle$.

Since $X^n-1$ and $nX^{n-1} = \frac{d}{dX} (X^n - 1)$ are coprime, it follows that $X^n-1$ has no repeated roots in $\overline{\mathbb{F}}_q$.  Let
\begin{equation}\label{eq:factorization} \prod_{i=1}^{N} f_i (X) = X^n-1\end{equation}
be a factorization of $X^n-1$ into monic irreducible polynomials $f_i(X) \in \mathbb{F}_q [X]$. Since $X^n-1$ has no repeated roots in $\overline{\mathbb{F}}_q$, the $f_i (X)$ are distinct. Moreover, each pair $f_i (X), f_j(X)$ is coprime, since if $p(X) \neq 1$ is a monic common factor of $f_i(X)$ and $f_j(X)$ then by irreducibility, we have $p(X) = f_i(X) = f_j(X)$, and therefore $i=j$.

Define a linear map
\begin{equation}
\label{eq:theta}
\theta : \frac{\mathbb{F}_q [X]}{\left \langle X^n -1 \right \rangle} \rightarrow \bigoplus_{i=1}^{N} {\frac{\mathbb{F}_q [X]}{\left \langle f_i (X) \right \rangle}};\quad \theta (p(X)) = \left(p(X) \text{ mod } f_i (X)\right)_{i=1}^{N},
\end{equation}
i.e.,\ $\theta$ is the direct sum of the natural quotient maps corresponding to the ideals generated by each $f_i$. Since the $f_i (X)$ are pairwise coprime, it follows from the Chinese Remainder Theorem for rings that $\theta$ is a linear isomorphism. For each $i \in [N]$, define
$$V_i = \left\{p(X) \in \frac{\mathbb{F}_q [X]}{\left \langle X^n -1 \right \rangle}\ :\  \prod_{j \neq i}{f_j(X)} \text{ divides } p(X) \right\}.$$
Since for each $i \in [N]$, we have
$$V_i = \theta^{-1}\left(\{0\} \times \ldots \times \{0\} \times  \frac{\mathbb{F}_q [X]}{\left \langle f_i (X) \right \rangle} \times \{0\} \times \ldots \times \{0\}\right),$$
(where the zeros are in each place except for the $i$th) and $\theta$ is a linear isomorphism, we have the direct sum decomposition
\begin{equation}
\label{eq:decomp}
\frac{\mathbb{F}_q [X]}{\left \langle X^n -1 \right \rangle} = \bigoplus_{i=1}^{N} {V_i},
\end{equation}
and $V_i$ may be viewed as the copy of $\mathbb{F}_q [X]/ \langle f_i (X) \rangle$ in $ \mathbb{F}_q [X]/ \langle X^n -1 \rangle$, for each $i \in [N]$. Moreover, each $V_i$ is closed under multiplication by $X$ (i.e., under the cyclic action of $\sigma$ the $V_i$ are \emph{invariant} subspaces).

Since $\textrm{char}(\mathbb{F}_q) \nmid n$, there exists a primitive $n$th root of unity in $\overline{\mathbb{F}}_q$. Let $\omega \in \overline{\mathbb{F}}_q$ be one such. Since $n=q^d-1$, $q$ has multiplicative order $d$ modulo $n$, and so the iterates of $\omega$ under the Frobenius automorphism are precisely $\omega,\omega^q,\omega^{q^2},\ldots,\omega^{q^{d-1}}$ (and these are distinct). Hence, by Proposition \ref{prop:min-poly}, the minimal polynomial of $\omega$ over $\mathbb{F}_q$ is
\begin{equation}\label{eq:min-poly}
f(X) = (X-\omega)(X-\omega^q)(X-\omega^{q^2})...(X-\omega^{q^{d-1}}) \in \mathbb{F}_q [X].
\end{equation}
As $f(X)$ is a monic irreducible factor of $X^n -1$, we may take $f_1(X) = f(X)$ in the factorization (\ref{eq:factorization}).

Let $u(X) \in V_1 \leq \mathbb{F}_q[X]/\langle X^n -1 \rangle$ such that $u(X) \equiv 1 \text{ mod } f(X) $. We claim that the cyclic orbit of $u(X)$ (i.e., its orbit under repeated multiplication by $X$) is equal to $V_1 \setminus \{0\}$.

To prove this, we first observe that $X^m u(X) \neq u(X)$ for all $1 \leq m \leq n-1$. Indeed, suppose for a contradiction there exists $m \in [n-1]$ such that multiplication by $X^m$ fixes $u(X)$ in $\mathbb{F}_q [X]/\left \langle X^n -1 \right \rangle$. Then $X^m u(X) \equiv u(X) \text{ mod }(X^n -1)$, and therefore $X^n-1$ divides $(X^m -1)u(X)$. It follows that $\omega$ is a root of $(X^m -1)u(X)$. Since $\omega$ is a primitive $n$th root of unity, we have $\omega^m - 1 \neq 0$, and therefore $u(\omega)=0$. Hence, as $f(X)$ is the minimal polynomial of $\omega$, $f(X)$ divides $u(X)$. This contradicts our assumption that $u(X) \equiv 1 \text{ mod } f(X)$.

It follows that $u(X), Xu(X), X^2 u(X),...,X^{n-1} u(X)$ are $n$ distinct elements of $V_1 \setminus \{0\}$. Since $\textrm{dim}(V_1) = \textrm{deg}(f(X)) = d$, we have $|V_1 \setminus \{0\}| = q^{d} - 1 = n$. It follows that the cyclic orbit of $u(X)$ is precisely $V_1 \setminus \{0\}$, as claimed.

Let $U = \text{Span}\{u(X)\} \leq V_1$. Clearly, by the preceding claim, $U$ cyclically covers $V_1$. Note that the codimension of $U$ as a subspace of $V_1$ is $\text{dim}(V_1) - 1 = d-1$.

Finally, we set
$$U' = U \oplus \left(\bigoplus_{i =2}^{N} {V_i}\right) \leq \frac{\mathbb{F}_q [X]}{\left \langle X^n -1 \right \rangle},$$
and we claim that $U'$ cyclically covers $\mathbb{F}_q [X]/\left \langle X^n -1 \right \rangle$. Indeed, given $v(X) \in \mathbb{F}_q [X]/ \langle X^n -1 \rangle$, there exist unique $v_i(X) \in V_i$ (for $i=1,2,\ldots,N$) such that $v(X) = \sum_{i=1}^{N} v_i (X)$. Since $U$ cyclically covers $V_1$, there exists $m \in \{0,1,2,...,n-1\}$ such that $X^m v_1(X) \in U$. Since $X^m v_i (X) \in V_i$ for all $i \in [N]$, it follows that $X^m v(X) = \sum_{i=1}^{N} {X^m v_i (X)} \in U'$. Hence, $U'$ cyclically covers $\mathbb{F}_q [X]/ \langle X^n -1 \rangle$. Since the codimension of $U$ in $V_1$ is equal to $d-1$, the codimension of $U'$ in $\mathbb{F}_q [X]/ \langle X^n -1 \rangle$ is also equal to $d-1$.

It follows that $h_q(n) \geq d-1$. In combination with the upper bound in Lemma \ref{lemma:simple-upper}, this completes the proof of the theorem.

\end{proof}

With only a little extra work, the argument in the proof of Theorem \ref{thm:main} can be extended to obtain the following more general lower bound.

\begin{theorem}
\label{thm:general}
Let $q$ be a prime power and let $k,d \in \mathbb{N}$. Let $M = (q-1)(\sum_{r=0}^{d}{q^{kr}}) = (q-1)(q^{kd+k}-1)/(q^k-1)$, and suppose that $M$ has a divisor $c \in \mathbb{N}$ such that $c< (q-1) \frac{q^k - q^{-kd}}{q^k - 1}$.
Then
\begin{equation*}
h_q(M / c) \geq kd + k - c(q^k-1)/(q-1).
\end{equation*}
\end{theorem}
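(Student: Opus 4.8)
The plan is to adapt the proof of Theorem~\ref{thm:main}: instead of producing a one-dimensional subspace whose single cyclic orbit fills an entire irreducible component, I would produce a subspace of the right dimension $\ell$ whose cyclic shifts jointly cover one irreducible component, of dimension $k(d+1)$. Write $n=M/c$. The first task is bookkeeping. Since $M=(q-1)(q^{k(d+1)}-1)/(q^k-1)$ and $(q-1)\mid(q^k-1)$, we have $M\mid q^{k(d+1)}-1$ (indeed $(q^{k(d+1)}-1)/M=(q^k-1)/(q-1)\in\mathbb{N}$), so $n\mid q^{k(d+1)}-1$ and $\gcd(n,q)=1$. Multiplying the hypothesis $c<(q-1)(q^k-q^{-kd})/(q^k-1)$ through by $q^{kd}$ gives exactly $cq^{kd}<M$, i.e.\ $n>q^{kd}$. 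Setting $e:=\mathrm{ord}_n(q)$, which divides $k(d+1)$, I claim $e=k(d+1)$: otherwise, since $k(d+1)\ge 2$, we would have $e\le k(d+1)/2\le kd$, whence $n\le q^e-1<q^{kd}$, a contradiction. I will also use $n>q^{kd}\ge 2^{kd}\ge k(d+1)$.

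Next I follow the proof of Theorem~\ref{thm:main} verbatim as far as the $\sigma$-invariant direct sum decomposition $\mathbb{F}_q[X]/\langle X^n-1\rangle=\bigoplus_{i=1}^N V_i$, where $V_i$ is the copy of $\mathbb{F}_q[X]/\langle f_i(X)\rangle$ for the distinct monic irreducible factors $f_i$ of $X^n-1$, and multiplication by $X$ on $V_i$ is multiplication by a root of $f_i$. Choosing a primitive $n$th root of unity $\omega\in\overline{\mathbb{F}}_q$, Proposition~\ref{prop:min-poly} together with $e=k(d+1)$ shows its minimal polynomial has degree $k(d+1)$; relabel so this is $f_1$. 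Then $F:=\mathbb{F}_q[X]/\langle f_1(X)\rangle\cong\mathbb{F}_{q^{k(d+1)}}$, the component $V_1$ is $\mathbb{F}_q$-linearly isomorphic to $F$, and under this identification multiplication by $X$ becomes multiplication by $\omega$ (the image of $X$ in $F$), which generates a subgroup $\langle\omega\rangle\le F^{\times}$ of order $n$, hence of index $(q^{k(d+1)}-1)/n=c(q^k-1)/(q-1)=:\ell$. If $k(d+1)\le\ell$ the claimed bound is $\le 0$ and nothing is to be proved (always $h_q(n)\ge 0$, as $\mathbb{F}_q^n$ covers itself), so assume $\ell<k(d+1)=\dim_{\mathbb{F}_q}F$.

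The key observation is that $\mathrm{Span}_{\mathbb{F}_q}\langle\omega\rangle=F$: the elements $1,\omega,\dots,\omega^{k(d+1)-1}$ form an $\mathbb{F}_q$-basis of $F$, and all lie in $\langle\omega\rangle=\{1,\omega,\dots,\omega^{n-1}\}$ since $n>k(d+1)$. Hence every coset $g\langle\omega\rangle$ of $\langle\omega\rangle$ in $F^{\times}$, being $g\cdot F$ as a span, also $\mathbb{F}_q$-spans $F$. Listing the $\ell$ cosets as $C_1,\dots,C_\ell$, I choose greedily elements $u_i\in C_i$ with $u_1,\dots,u_\ell$ linearly independent over $\mathbb{F}_q$: given independent $u_1,\dots,u_{j-1}$, their span has dimension $j-1<\ell<\dim F$, so it is proper, and since $\mathrm{Span}_{\mathbb{F}_q}C_j=F$ strictly contains it, some $u_j\in C_j$ lies outside it. Put $U:=\mathrm{Span}_{\mathbb{F}_q}(u_1,\dots,u_\ell)\le V_1$, so $\dim U=\ell$. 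Now $U\setminus\{0\}$ meets every coset of $\langle\omega\rangle$ in $F^{\times}$ (it contains $u_i\in C_i$), so $\langle\omega\rangle\cdot(U\setminus\{0\})=F^{\times}$, i.e.\ $\bigcup_{m=0}^{n-1}\omega^mU=F$; equivalently $\bigcup_{m=0}^{n-1}X^mU=V_1$. Then, exactly as at the end of the proof of Theorem~\ref{thm:main}, $U':=U\oplus\bigoplus_{i=2}^N V_i$ cyclically covers $\mathbb{F}_q[X]/\langle X^n-1\rangle\cong\mathbb{F}_q^n$ (for $v=\sum_i v_i$ with $v_i\in V_i$, pick $m$ with $X^m v_1\in U$; invariance of the $V_i$ gives $X^m v\in U'$), with codimension equal to that of $U$ in $V_1$, namely $k(d+1)-\ell=kd+k-c(q^k-1)/(q-1)$. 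Hence $h_q(M/c)\ge kd+k-c(q^k-1)/(q-1)$, as required.

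The main obstacle I expect is not a single hard step but the bookkeeping: correctly extracting $\mathrm{ord}_n(q)=k(d+1)$ and $n>k(d+1)$ from the somewhat opaque hypothesis on $c$, confirming that the relevant invariant component has dimension exactly $k(d+1)$, and checking that the index of $\langle\omega\rangle$ in $F^{\times}$ is exactly $\ell=c(q^k-1)/(q-1)$, so that the number of cosets to be hit matches the available dimension budget. Once these are in place, the ``rainbow basis'' construction is immediate from the observation that every coset of $\langle\omega\rangle$ spans $F$ over $\mathbb{F}_q$.
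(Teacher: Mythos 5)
Your proposal is correct, and the overall skeleton (CRT decomposition, isolate the component $V_1$ of dimension $k(d+1)$, cover $V_1$ with a subspace of dimension equal to the number of cyclic orbits, and extend by adding the remaining $V_i$) matches the paper's. The interesting difference is in how you establish the orbit structure inside $V_1$. The paper's proof of this theorem argues via \emph{irreducibility} of $V_1$ as a $C_n$-module: it shows any nonzero $X$-invariant subspace of $V_1$ must be all of $V_1$, then deduces that every nonzero orbit must have size exactly $n$ by comparing with the orbit of the reference vector $u(X)\equiv 1 \bmod f_1$. You instead pass immediately to the field picture $V_1\cong\mathbb{F}_{q^{k(d+1)}}$, observe that multiplication by $X$ is multiplication by the primitive $n$th root $\omega$, and read off that the orbits in $V_1\setminus\{0\}$ are precisely the cosets of $\langle\omega\rangle$ in $F^\times$, each of size $n$, with index $\ell = c(q^k-1)/(q-1)$. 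This is closer in spirit to the paper's proof of Theorem~\ref{thm:pjc} than to its proof of Theorem~\ref{thm:general}, and it is arguably cleaner and more transparent. Your ``rainbow basis'' greedy step --- choosing the coset representatives to be $\mathbb{F}_q$-linearly independent, using the fact that each coset $\mathbb{F}_q$-spans $F$ --- is a nice extra that the paper does not bother with: the paper simply takes arbitrary representatives $u_1,\dots,u_s$ and uses $\dim\mathrm{Span}\{u_1,\dots,u_s\}\le s$ to get ``codimension at least $k(d+1)-s$'', which is all the statement requires. Your handling of the degenerate case $\ell\ge k(d+1)$ (bound becomes $\le 0$, trivially true) is correct and necessary for the greedy argument. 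In short: correct proof, same strategy, but with the orbit-size lemma derived from the field structure rather than from module irreducibility, plus an optional strengthening to linearly independent representatives.
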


\begin{proof}
Let $q$ be a prime power, let $k,d \in \mathbb{N}$ and let $M = (q-1)(\sum_{r=0}^{d}{q^{kr}})$. Let $c \in \mathbb{N}$ be a divisor of $M$ satisfying $c< (q-1) \frac{q^k - q^{-kd}}{q^k - 1}$. Set $n := M/c$. As in the proof of Theorem \ref{thm:main}, we identify $\mathbb{F}_q^n$ with $\mathbb{F}_q[X]/\left \langle X^n - 1 \right \rangle$, and we decompose the latter into invariant subspaces,
$$\mathbb{F}_q[X]/\left \langle X^n - 1 \right \rangle = \bigoplus_{i = 1}^{N}{V_i},$$
by taking a factorisation
$$X^n - 1 = \prod_{i=1}^{N}{f_i(X)}$$
of $X^n-1$ into a product of irreducible monic factors, and taking $V_i$ to be the preimage of $\mathbb{F}_q[X]/\left \langle f_i(X) \right \rangle$ under the direct sum of the natural quotient maps, $p(X) \mapsto p(X) \text{ mod } f_i(X)$.

Note that the $V_i$ are irreducible subspaces. Indeed, suppose that $\{0\} \neq W \leq V_i$ and that $W$ is invariant under multiplication by $X$. Let $p(X) \in \mathbb{F}_q[X]$ such that the image of $p$ (under the natural quotient map $\mathbb{F}_q[X] \to \mathbb{F}_q[X]/\langle X^n -1 \rangle$) lies in $W \setminus \{0\}$. Then $f_i(X)$ does not divide $p(X)$, and $f_i(X)$ is irreducible, so $p(X)$ and $f_i(X)$ are coprime. Hence, by B\'ezout's lemma, there exist $s(X),t(X) \in \mathbb{F}_q[X]$ such that $s(X)p(X) + t(X)f_i(X) = 1$. Let $q(X) := s(X)p(X)$; we have $q(X) \equiv 1 \text{ mod } f_i(X)$. The invariance of $W$ under multiplication by $X$ implies that $q(X) \in W$, and moreover that

\begin{equation}
	\{q(X),Xq(X),X^2q(X),\ldots \} \subseteq W
\end{equation}
But the set on the left-hand side contains a basis for $V_i$, since for each $0 \leq r \leq n-1$ we have $X^r q(X) \equiv X^r \text{ mod } f_i(X)$. It follows that that $W = V_i$, proving the irreducibility of $V_i$.

We now continue to follow the proof of Theorem \ref{thm:main}. Let $\omega \in \overline{\mathbb{F}}_q$ be a primitive $n$th root of unity. We claim that the order of $q$ modulo $n$ is $k(d+1)$. Indeed, let $L$ be the order of $q$ modulo $n$. Since $nc(\sum_{t=0}^{k-1}{q^t}) = q^{k(d+1)} - 1$, we have $q^{k(d+1)} \equiv 1 \text{ mod } n$, and therefore $L$ divides $k(d+1)$. Since $q^{kd} < n \Leftrightarrow c < (q-1) \frac{q^k - q^{-kd}}{q^k - 1}$, we have $L>kd$. Since $kd \geq \tfrac{1}{2}k(d+1)$, no non-trivial factor of $k(d+1)$ is greater than $kd$, and therefore $L=k(d+1)$, as claimed. By Proposition \ref{prop:min-poly}, the minimal polynomial of $\omega$ over $\mathbb{F}_q$ is
\begin{equation*}
	f(X) = (X- \omega)(X- \omega^q)(X- \omega^{q^2})...(X-\omega^{q^{k(d+1) - 1}}) \in \mathbb{F}_q[X],
\end{equation*}
which has degree $k(d+1)$. We may assume without loss of generality that $f_1(X) = f(X)$, and consider $V_1$. As in the proof of Theorem \ref{thm:main}, let $u(X) \in V_1 \leq \mathbb{F}_q[X]/\langle X^n-1\rangle$ such that $u(X) \equiv 1 \text{ mod } f_1(X)$, and recall from the proof of Theorem \ref{thm:main} that $u(X)$ has orbit (under repeated multiplication by $X$) of size exactly $n$. More generally, let $0 \neq v(X) \in V_1$; we claim that the orbit 
\begin{equation*}
\{v(X),Xv(X),X^2 v(X),..., X^{n-1}v(X)\} \subseteq V_1
\end{equation*}
also has size exactly $n$. Indeed, since $V_1$ is irreducible, and 
\begin{equation*}
0 \neq \text{Span}(\{v(X),Xv(X),X^2 v(X),..., X^{n-1}v(X)\}) \leq V_1
\end{equation*}
is a subspace that is invariant under multiplication by $X$, we see that $\{v(X),Xv(X),X^2 v(X),..., X^{n-1}v(X)\}$ spans $V_1$. Suppose for a contradiction there exists $1 \leq a \leq n-1$ such that $X^a v(X) = v(X)$ (note that this is an equality in $\mathbb{F}_q[X]/\langle X^n -1 \rangle$). We may then express $u(X)$ as a linear combination $u(X) = \sum_{i=0}^{a-1}{\lambda_i X^i v(X)}$, for some $\lambda_i \in \mathbb{F}_q$. But then
\begin{equation*}
 X^a u(X) = \sum_{i=0}^{a-1}{\lambda_i X^i X^a v(X)} = \sum_{i=0}^{a-1}{\lambda_i X^i v(X)} = u(X)
\end{equation*}
contradicting the fact that the orbit of $u(X)$ has size exactly $n$. It follows that $v(X)$ has orbit of size exactly $n$, as claimed.

We may conclude all the orbits (under repeated multiplication by $X$) in $V_1 \setminus \{0\}$ have size $n$. There are $s := (|V_1| - 1) /n = (q^{k(d+1)} - 1)/n = c\sum_{t= 0}^{k-1}{q^t}$ such orbits; let $\{u_1,u_2,...,u_s\}$ be a set of representatives of these orbits. Then $U = \text{Span}(\{u_1,u_2,...,u_s\}) \leq V_1$ cyclically covers $V_1$, and has codimension (in $V_1$) at least $k(d+1) - s$.

Finally, we set
\begin{equation*}
U' = U \oplus \left( \bigoplus_{i \neq 1}{V_i} \right)  \leq \frac{\mathbb{F}_q [X]}{\left \langle X^n - 1 \right \rangle};
\end{equation*}
note that $U'$ cyclically covers $\mathbb{F}_q [X]/\langle X^n - 1 \rangle$ and has codimension (in $\mathbb{F}_q [X]/\langle X^n - 1 \rangle$) at least $k(d+1) - s = kd +k - c\sum_{t=0}^{k-1}{q^t} = kd+k - c(q^k-1)/(q-1)$. It follows that $h_q(n) \geq kd+k - c(q^k-1)/(q-1)$, as required.

\end{proof}

Applying the above theorem with $c = 1$, fixed $q,k$ and $d \to \infty$, and appealing to Lemma \ref{lemma:simple-upper}, we see that
\begin{equation*}
h_q\left((q-1)\sum_{r=0}^{d}{q^{kr}}\right) = (1+o(1))kd
\end{equation*}
where the $o(1)$ term tends to zero as $d$ tends to infinity. Theorem \ref{thm:main} is recovered from Theorem \ref{thm:general} by setting $k = 1$ and $c=1$.

We now demonstrate how a slight variation on the ideas in the proofs of Theorem \ref{thm:main} and Theorem \ref{thm:general} can determine $h_q(n)$ for other infinite sequences of integers $n$ (for each fixed prime power $q$).

\begin{theorem}
\label{thm:pjc}
Let $q$ be a prime power, and let $k,d \in \mathbb{N}$ such that $\text{gcd}(d+1,q^k - 1) = 1$. Set $n = \sum_{r=0}^{d}{q^{kr}} = \frac{q^{k(d+1)}-1}{q^k-1}$. Then
\[h_q(n)=kd.\]
\end{theorem}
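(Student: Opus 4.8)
The plan is to mirror the strategy of Theorems \ref{thm:main} and \ref{thm:general}: identify $\mathbb{F}_q^n$ with $\mathbb{F}_q[X]/\langle X^n-1\rangle$, factor $X^n-1$ into distinct monic irreducibles $f_i$ and obtain the direct sum decomposition $\bigoplus_{i=1}^N V_i$ into $\sigma$-invariant subspaces, then build a small cyclically covering subspace of the block $V_1$ attached to a carefully chosen factor $f_1$ and pad it with $\bigoplus_{i\neq 1}V_i$. The upper bound $h_q(n)\le kd$ is immediate from Lemma \ref{lemma:simple-upper} once one checks, by an elementary estimate on $n=(q^{k(d+1)}-1)/(q^k-1)$, that $q^{kd}\le n<q^{kd+1}$, so $\lfloor\log_q n\rfloor=kd$. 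All the work is in the matching lower bound $h_q(n)\ge kd$.

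First I would compute the multiplicative order $L$ of $q$ modulo $n$. Since $n\mid q^{k(d+1)}-1$ we have $L\mid k(d+1)$; if $L$ were a proper divisor then $L\le k(d+1)/2$, forcing $n\le q^L-1<q^{k(d+1)/2}$, contradicting $n\ge q^{kd}$ (as $d\ge 1$). Hence $L=k(d+1)$. Since $\gcd(n,q)=1$ there is a primitive $n$th root of unity $\omega\in\overline{\mathbb{F}}_q$, and by Proposition \ref{prop:min-poly} its minimal polynomial $f$ over $\mathbb{F}_q$ has degree $L=k(d+1)$. As $X^n-1$ is squarefree and $f\mid X^n-1$, I may take $f_1=f$ in the factorisation; then $\dim V_1=\deg f_1=k(d+1)$, and $V_1\cong\mathbb{F}_q[X]/\langle f\rangle\cong\mathbb{F}_{q^{k(d+1)}}$ as $\mathbb{F}_q$-algebras, with multiplication by $X$ (i.e.\ the action of $\sigma$ on $V_1$) corresponding to multiplication by $\omega$ in $\mathbb{F}_{q^{k(d+1)}}$.

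The crux is to exhibit a $k$-dimensional $\mathbb{F}_q$-subspace $W\le V_1$ whose cyclic shifts cover $V_1$. Under the identification $V_1=\mathbb{F}_{q^{k(d+1)}}$, the cyclic orbit of a nonzero $v$ is the coset $v\langle\omega\rangle$, and $\langle\omega\rangle$ is exactly the group of $n$th roots of unity in $\mathbb{F}_{q^{k(d+1)}}^*$ (this group has order $\gcd(n,q^{k(d+1)}-1)=n$ and contains $\langle\omega\rangle$, so equals it). Hence the power map $\phi\colon x\mapsto x^n$ is a surjective homomorphism $\mathbb{F}_{q^{k(d+1)}}^*\to\mathbb{F}_{q^k}^*$ whose image is the unique subgroup of order $(q^{k(d+1)}-1)/n=q^k-1$, namely $\mathbb{F}_{q^k}^*$, and whose fibres are precisely the $q^k-1$ cyclic orbits in $V_1\setminus\{0\}$. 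I would then take $W:=\mathbb{F}_{q^k}$, the subfield of $\mathbb{F}_{q^{k(d+1)}}$ (well-defined since $k\mid k(d+1)$), which is an $\mathbb{F}_q$-subspace of dimension $k$. Restricted to $\mathbb{F}_{q^k}^*$, the map $\phi$ is $x\mapsto x^n$ on a cyclic group of order $q^k-1$, so it is a bijection precisely when $\gcd(n,q^k-1)=1$; reducing $n=\sum_{r=0}^{d}q^{kr}$ modulo $q^k-1$ using $q^k\equiv 1$ gives $n\equiv d+1$, so this condition is exactly the hypothesis $\gcd(d+1,q^k-1)=1$. Thus $W\setminus\{0\}$ meets every cyclic orbit of $V_1$ exactly once, the cyclic shifts of $W$ cover $V_1$, and $\textrm{codim}_{V_1}(W)=k(d+1)-k=kd$.

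Finally, exactly as at the end of the proofs of Theorems \ref{thm:main} and \ref{thm:general}, the subspace $U':=W\oplus\bigoplus_{i\neq 1}V_i$ cyclically covers $\mathbb{F}_q[X]/\langle X^n-1\rangle$ and has codimension $kd$, so $h_q(n)\ge kd$; together with the upper bound this gives $h_q(n)=kd$. The main point requiring care is the identification of the cyclic orbits of $V_1$ with the fibres of $x\mapsto x^n$ and the observation that $\gcd(d+1,q^k-1)=1$ is exactly the condition making the subfield $\mathbb{F}_{q^k}$ a \emph{linear} transversal of these orbits; checking the order of $q$ modulo $n$, the squarefreeness and factor structure of $X^n-1$, and the padding step are all routine adaptations of the earlier arguments.
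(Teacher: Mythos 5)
Your proof is correct, and the global structure mirrors the paper's: decompose $\mathbb{F}_q[X]/\langle X^n-1\rangle$ into invariant blocks via the Chinese Remainder Theorem, compute $\mathrm{ord}_n(q)=k(d+1)$ so that the block $V_1\cong\mathbb{F}_{q^{k(d+1)}}$, find a $k$-dimensional $\mathbb{F}_q$-subspace of $V_1$ whose $X$-shifts cover $V_1$, and pad with $\bigoplus_{i\neq 1}V_i$. Where you diverge is in the crux step inside $V_1$. The paper takes an \emph{arbitrary} one-dimensional $\mathbb{F}_{q^k}$-subspace $U\leq V_1$, proves $X^aU\neq U$ for $0<a<n$ by showing the orbit of a nonzero $u\in U$ under $\langle X^a\rangle$ would have size a proper divisor $M$ of $n$ also dividing $|U|-1=q^k-1$, contradicting $\gcd(n,q^k-1)=1$; it then observes distinct one-dimensional $\mathbb{F}_{q^k}$-subspaces intersect in $\{0\}$ and closes by a counting argument $n(q^k-1)+1=|V_1|$. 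You instead pick the \emph{specific} subspace $W=\mathbb{F}_{q^k}$ and argue arithmetically: the $n$-th power map $\phi\colon x\mapsto x^n$ on $\mathbb{F}_{q^{k(d+1)}}^*$ has kernel exactly $\langle\omega\rangle$ (so its fibres are the cyclic orbits) and image $\mathbb{F}_{q^k}^*$, and restricted to $\mathbb{F}_{q^k}^*$ it is bijective precisely when $\gcd(n,q^k-1)=1$, making $W\setminus\{0\}$ a transversal of the orbits. Both routes hinge on the same congruence $n\equiv d+1\pmod{q^k-1}$. Your power-map argument is slightly slicker — it yields the transversal property and the equal-orbit-size fact in one stroke, without the separate ``no self-map'' and ``pairwise trivial intersection'' claims — at the modest cost of committing to the subfield $\mathbb{F}_{q^k}$ rather than an arbitrary $\mathbb{F}_{q^k}$-line; the paper's version makes visible that the construction works for any $\mathbb{F}_{q^k}$-line, which is a marginally stronger geometric statement but not needed for the theorem.
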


\begin{proof}
The upper bound $h_q(n)\leq kd$ follows immediately from Lemma \ref{lemma:simple-upper}, so we need
only prove the lower bound. We first note that 
$$n =\sum_{r=0}^{d}{q^{kr}} \equiv d+1 \ \text{mod } q^k-1,$$
since $q^{kr} \equiv 1$ mod $q^k-1$ for each $r \in \mathbb{N} \cup \{0\}$, so
\begin{equation}\label{eq:cong} \text{gcd}(n,q^k-1) = \text{gcd}(d+1,q^k-1)=1.\end{equation}

As in the proofs of Theorems \ref{thm:main} and \ref{thm:general}, we identify $\mathbb{F}_q^n$ with $\mathbb{F}_q[X]/\left \langle X^n - 1 \right \rangle$, and we decompose the latter into invariant subspaces,
$$\mathbb{F}_q[X]/\left \langle X^n - 1 \right \rangle = \bigoplus_{i = 1}^{N}{V_i},$$
by taking a factorisation
$$X^n - 1 = \prod_{i=1}^{N}{f_i(X)}$$
of $X^n-1$ into a product of irreducible monic factors, and taking $V_i$ to be the preimage of $\mathbb{F}_q[X]/\left \langle f_i(X) \right \rangle$ under the direct sum of the natural quotient maps.

Let $\omega \in \overline{\mathbb{F}}_q$ be a primitive $n$th root of unity. As in the proof of Theorem \ref{thm:general}, we claim that $q$ has multiplicative order $k(d+1)$ modulo $n$. Indeed, let $L$ be the order of $q$ modulo $n$. Since $q^{k(d+1)} - 1 = n (q^k - 1)$, we have $q^{k(d+1)} \equiv 1 \text{ mod }n$, and therefore $L$ divides $k(d+1)$. Since $q^{kd} < n$, we must have $L>kd$. Since $kd \geq \tfrac{1}{2}k(d+1)$, no non-trivial factor of $k(d+1)$ is greater than $kd$, and therefore $L=k(d+1)$, as claimed. By Proposition \ref{prop:min-poly},
the minimal polynomial of $\omega$ over $\mathbb{F}_q$ is
\begin{equation*}
	f(X) = (X- \omega)(X- \omega^q)(X- \omega^{q^2})...(X-\omega^{q^{k(d+1) - 1}}) \in \mathbb{F}_q[X],
\end{equation*}
which has degree $k(d+1)$. We may assume without loss of generality that $f_1(X) = f(X)$, and consider $V_1$. Since $V_1 \cong \mathbb{F}_q[X]/\langle f(X) \rangle$ and $f(X)$ is an irreducible polynomial of degree $k(d+1)$, $V_1$ is in fact a field extension of $\mathbb{F}_q$ (of degree $k(d+1)$), and as such can be identified with the finite field $\mathbb{F}_{q^{k(d+1)}}$. Hence, $V_1$ can also be viewed as a $(d+1)$-dimensional vector space over (a field isomorphic to) $\mathbb{F}_{q^k}$.

Let $U$ be a 1-dimensional $\mathbb{F}_{q^k}$-subspace of $V_1$. We now make two claims regarding $U$. Firstly, we claim that no power of the shift map can map $U$ to itself. Indeed, suppose for a contradiction that there exists $a \in [n-1]$ such that $X^a U=U$. Then for any $u \in U \setminus \{0\}$, we have
\begin{equation*}
	\{u,X^a u, X^{2a}u,\ldots\} \subseteq U.
\end{equation*}

We note, as in the proof of Theorem \ref{thm:general}, that for every $v \in V_1 \setminus \{0\}$, the orbit of $v$ under repeated multiplication by $X$ has size $n$. Hence, for $j \in \mathbb{N}$, $X^j v = v$ if and only if $n \mid j$. It follows that for any $u \in U \setminus \{0\}$, we have $X^{aj} u = u$ if and only if $n \mid aj$, i.e.\ if and only if $n/\text{gcd}(a,n) \mid j$. Therefore, the above orbit of $u$ under repeated multiplication by $X^a$ has size exactly $n/\text{gcd}(a,n) :=M$. The family of all such orbits (of non-zero elements of $U$, under repeated multiplication by $X^a$) partitions $U \setminus \{0\}$ into sets of equal size $M$, and therefore $M$ is a proper divisor of $n$ that also divides $|U|-1 = q^k-1$. But this contradicts (\ref{eq:cong}).

Secondly, we claim that $X^b U \cap X^c U = \{0\}$ for any $0 \leq b < c \leq n-1$. Indeed, suppose for a contradiction that there exist $0 \leq b < c \leq n-1$ such that $X^b U \cap X^c U \neq \{0\}$. Then, multiplying by $X^{n-b}$, we have $X^{n}U \cap X^{n+c-b}U \neq \{0\}$ and therefore $U \cap X^a U \neq \emptyset$, where $a: = c-b \in [n-1]$. However, $U$ and $X^a U$ are distinct 1-dimensional $\mathbb{F}_{q^k}$-subspaces of $V_1$, so have intersection $\{0\}$, a contradiction.

It follows that $U,XU,\ldots,X^{n-1}U$ are $q^k$-element subsets of $V_1$ whose pairwise intersections are all equal to $\{0\}$; since $q^{k(d+1)}-1 = n (q^k-1)$, we must have $V_1 = \cup_{a=0}^{n-1}X^a U$, so $U$ (as an $\mathbb{F}_q$-subspace of $V_1$) is a cyclic cover of $V_1$ with codimension $kd$ in $V_1$.

Set $U' = U \oplus (\oplus_{i \neq 1}{V_i})$; then $U'$ is a cyclic cover of $\mathbb{F}_q [X]/\langle X^n - 1 \rangle$ with codimension $kd$, so $h_q(n) \geq kd$, as required.
\end{proof}

We note that for a fixed prime power $q$ and a fixed integer $k$, a positive fraction of positive integers $d$ have the property that $\text{gcd}(d+1,q^{k}-1)=1$ (and so satisfy the hypothesis of Theorem \ref{thm:pjc}).

\subsubsection*{Implications for the lower bound in Isbell's conjecture and the generalised Isbell conjecture}

The $q=2$ cases of Theorems \ref{thm:main} and \ref{thm:pjc}, together with the $p=2$ case of Proposition \ref{prop:isbell-connection}, imply the following.
\begin{corollary}
\label{corr:isbell}
For any $d \in \mathbb{N}$, we have $h_2(2^d-1)=d-1$, and therefore $m(2^d-1) \geq d$. Moreover, for any $d,k \in \mathbb{N}$ with $\text{gcd}(d+1,2^{k}-1)=1$, we have $h_2((2^{k(d+1)}-1)/(2^k-1)) = kd$, and therefore $m((2^{k(d+1)}-1)/(2^k-1)) \geq kd+1$. 
\end{corollary}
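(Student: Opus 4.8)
The plan is to derive the corollary as a formal consequence of results already established: Theorem \ref{thm:main} and Theorem \ref{thm:pjc} compute $h_2$ at the relevant arguments, and Proposition \ref{prop:isbell-connection} converts such a value into a lower bound for $m_p$. The first step is to note that the function $m$ appearing in Isbell's original conjecture is precisely $m_2$ restricted to the odd integers, since for $p=2$ the condition $\gcd(b,p)=1$ is the same as ``$b$ odd''; thus it suffices to bound $m_2$ from below.

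For the first assertion, I would apply Theorem \ref{thm:main} with $q=2$ to obtain $h_2(2^d-1)=d-1$. Since $2^d-1$ is odd, hence coprime to $2$, Proposition \ref{prop:isbell-connection} (with $p=2$ and $b=2^d-1$) gives $m(2^d-1)=m_2(2^d-1)>h_2(2^d-1)=d-1$; as $m$ is integer-valued, this yields $m(2^d-1)\geq d$.

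For the second assertion, fix $d,k\in\mathbb{N}$ with $\gcd(d+1,2^k-1)=1$ and set $n=(2^{k(d+1)}-1)/(2^k-1)=\sum_{r=0}^{d}2^{kr}$. Theorem \ref{thm:pjc} applies with $q=2$, giving $h_2(n)=kd$. It remains only to check that $n$ is coprime to $2$, which is immediate: $n=1+2^k+2^{2k}+\cdots+2^{kd}$ is odd, since every term after the first is even. Hence Proposition \ref{prop:isbell-connection} with $p=2$ and $b=n$ gives $m(n)=m_2(n)>h_2(n)=kd$, i.e.\ $m(n)\geq kd+1$.

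There is no real obstacle here; the only points requiring a line of verification are (i) that the integers $2^d-1$ and $\sum_{r=0}^{d}2^{kr}$ are odd, so that Proposition \ref{prop:isbell-connection} is applicable, and (ii) the identification of Isbell's $m$ with $m_2$ on odd inputs. As a sanity check, one may note that the first assertion is simply the $k=1$ case of the second (with $d$ replaced by $d-1$, and using that $\gcd(d,1)=1$ always holds), so the two parts of the corollary are really a single statement presented in two forms.
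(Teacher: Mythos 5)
Your proposal is correct and matches the paper's (unwritten) derivation exactly: the corollary is stated in the paper as an immediate consequence of the $q=2$ cases of Theorems \ref{thm:main} and \ref{thm:pjc} together with the $p=2$ case of Proposition \ref{prop:isbell-connection}, which is precisely what you spell out, including the (easy but necessary) checks that $2^d-1$ and $\sum_{r=0}^{d}2^{kr}$ are odd and that Isbell's $m$ coincides with $m_2$ on odd integers. Your closing remark that the first assertion is the $k=1$, $d\mapsto d-1$ case of the second is a nice observation, though it technically only covers $d\geq 2$ since Theorem \ref{thm:pjc} requires $d\in\mathbb{N}$; the $d=1$ case is the trivial $h_2(1)=0$.
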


We remark that a construction due to Spiga \cite{spiga-thesis} (building on the work of Suzuki \cite{suzuki} in which he introduced and analysed the Suzuki groups), gives
$$m((2^{kr}-1)/(2^k-1)) \geq k(r-1)^2+1$$
for all primes $r > 2$ and integers $k \in \mathbb{N}$ coprime to $2^r-1$. In particular, $m(2^r-1) \geq (r-1)^2+1$ for all primes $r >2$, giving a lower bound on $m(b)$ that is quadratic in $\log b$ for infinitely many $b$, whereas our lower bound in Corollary \ref{corr:isbell} is only linear in $\log b$. Spiga's construction involves replacing the Abelian group $V$ (in the penultimate paragraph of the Introduction) with a non-Abelian group $N$ (for example, a certain Sylow 2-subgroup of a Suzuki group), and finding a subgroup of $N$ of large index, whose images under an appropriate cyclic automorphism cover $N$. However, our construction is simpler and provides good lower bounds on the function $m$ for other natural infinite sequences of odd integers, where Spiga's method does not apply.

Similarly, the $q=p$ cases of Theorems \ref{thm:main} and \ref{thm:pjc}, together with the general case of Proposition \ref{prop:isbell-connection}, imply the following.

\begin{corollary}
\label{corr:isbell-gen}
For any $d \in \mathbb{N}$, we have $h_p(p^d-1)=d-1$, and therefore $m_p(p^d-1) \geq d$. Moreover, for any $d,k \in \mathbb{N}$ with $\text{gcd}(d+1,p^{k}-1)=1$, we have $h_p((p^{k(d+1)}-1)/(p^k-1)) = kd$, and therefore $m_p((p^{k(d+1)}-1)/(p^k-1)) \geq kd+1$. 
\end{corollary}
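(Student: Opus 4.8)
The plan is to deduce this corollary directly, with no new ideas, from Theorems \ref{thm:main} and \ref{thm:pjc} applied with $q=p$, together with Proposition \ref{prop:isbell-connection}. The only points that need to be checked along the way are that the two families of integers appearing in the statement are coprime to $p$, so that Proposition \ref{prop:isbell-connection} is applicable to them.

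For the first assertion, I would set $q=p$ in Theorem \ref{thm:main}, which immediately gives $h_p(p^d-1)=d-1$ for every $d\in\mathbb{N}$. Since $p^d\equiv 0\pmod p$, we have $p^d-1\equiv -1\pmod p$, so $\gcd(p^d-1,p)=1$; hence Proposition \ref{prop:isbell-connection} applies and gives $m_p(p^d-1)>h_p(p^d-1)=d-1$, that is, $m_p(p^d-1)\geq d$.

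For the second assertion, I would fix $d,k\in\mathbb{N}$ with $\gcd(d+1,p^k-1)=1$, which is precisely the hypothesis of Theorem \ref{thm:pjc} with $q=p$, and apply that theorem to obtain $h_p(n)=kd$, where $n:=(p^{k(d+1)}-1)/(p^k-1)=\sum_{r=0}^{d}p^{kr}$. To invoke Proposition \ref{prop:isbell-connection} I need $\gcd(n,p)=1$; but $p^{kr}\equiv 0\pmod p$ for each $r\geq 1$, while the $r=0$ term of the sum equals $1$, so $n\equiv 1\pmod p$ and thus $\gcd(n,p)=1$. Proposition \ref{prop:isbell-connection} then gives $m_p(n)>h_p(n)=kd$, i.e.\ $m_p(n)\geq kd+1$, completing the proof.

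I do not expect any genuine obstacle: all the substantive work is contained in Theorems \ref{thm:main} and \ref{thm:pjc}, and the deduction requires only the two elementary coprimality observations above (each of the relevant integers is congruent to a unit modulo $p$). The argument is word-for-word the same as that for Corollary \ref{corr:isbell}, with $2$ replaced by $p$ and $m$ by $m_p$.
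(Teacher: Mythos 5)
Your proposal is correct and matches the paper's (implicit) proof exactly: the corollary is stated there as an immediate consequence of Theorems \ref{thm:main} and \ref{thm:pjc} with $q=p$ together with Proposition \ref{prop:isbell-connection}. The only thing you add is the explicit (and correct) check that $p^d-1$ and $(p^{k(d+1)}-1)/(p^k-1)$ are coprime to $p$, which the paper leaves unsaid.
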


In this case, Spiga's construction in \cite{spiga-thesis} yields 
$$m_p((p^{kr}-1)/(p^k-1)) \geq k(r-1)^2 +1$$
for all primes $r \neq p$ and $k \in \mathbb{N}$ such that $r$ and $p^k-1$ are coprime. In particular, $m_p((p^r-1)/(p-1)) \geq (r-1)^2+1$ for all primes $r \neq p$ such that $r$ does not divide $p-1$. Again, for a fixed prime $p$, this gives a lower bound on $m_p(b)$ that is quadratic in $\log b$ for infinitely many $b$, whereas our lower bound in Corollary \ref{corr:isbell-gen} is only linear in $\log b$; but again, our construction is simpler and works in cases where Spiga's method does not apply.

\section{General representations of groups}

In this section, we generalise our discussion to arbitrary group representations. The proofs of the bounds in this section are straightforward, given some basic facts from the representation theory of finite groups; nevertheless, each bound is sharp in some non-trivial cases (in fact, infinitely many).

Let $G$ be a group, let $\mathbb{F}$ be a field and let $V$ be a vector space over $\mathbb{F}$. We write $\text{GL}(V)$ for the general linear group of $V$. Let $\rho: G \rightarrow \text{GL}(V)$ be a group homomorphism, i.e.\ $(\rho,V)$ is a representation of $G$. Let us say that a subspace $U \leq V$ is {\em $(G,\rho)$-covering} if
\begin{equation*}
\bigcup_{g \in G}{\rho(g)(U)} = V
\end{equation*}
where $\rho(g)(U) := \{\rho(g)(u) :\ u \in U\}$. Let us define $h_{G,\rho}(V)$ to be the maximum possible codimension of a $(G,\rho)$-covering subspace of $V$. Note that $h_q(n) = h_{C_n,\rho_{\sigma}}(\mathbb{F}_q^n)$ for any prime power $q$ and any $n \in \mathbb{N}$, where $(\rho_{\sigma},\mathbb{F}_q^n)$ is the representation of $C_n$ that maps the generator of $C_n$ to $\sigma$.

Let us briefly outline the representation-theoretic terminology and notation we will use. As usual, from now on we will sometimes write $g(u)$ in place of $\rho(g)(u)$, when the representation $\rho$ is understood. Recall that if $(\rho,V)$ is a fixed representation of $G$, a subspace $W \leq V$ is said to be {\em $G$-invariant} if $\rho(g)(w) \in W$ for any $w \in W$ and any $g \in G$; in this case, $(\rho,W)$ is said to be a {\em subrepresentation} of $(\rho,V)$. Abusing terminology slightly, when $\rho$ is understood, we will sometimes omit it from our notation, and describe $W$ as a subrepresentation of $V$.

If $G$ is a finite group, $q$ is a prime power, $V$ is a finite dimensional vector space over $\mathbb{F}_q$ and $(\rho,V)$ is a representation of $G$, it is easy to obtain the upper bound
\begin{equation}\label{eq:gen-orbit-counting} h_{G,\rho}(V) \leq \left \lfloor\log_q {|G|} \right \rfloor,\end{equation}
just as in the proof of Lemma \ref{lemma:simple-upper}, since the group $G$ acts on $V$, partitioning $V$ into orbits, each of size at most $|G|$.

Turning to general lower bounds, the following is easy to obtain.

\begin{lemma}
\label{lemma:lower-gen}
Let $\mathbb{F}$ be a field, let $G$ be a finite group such that $\textrm{char}(\mathbb{F}) \nmid |G|$, let $V$ be a vector space over $\mathbb{F}$, and let $(\rho,V)$ be a representation of $G$. If $W$ is a subrepresentation of $V$, then
\begin{equation*}
h_{G,\rho}(W) \leq h_{G,\rho}(V).
\end{equation*}
\end{lemma}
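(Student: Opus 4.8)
The plan is to exploit Maschke's theorem: since $\textrm{char}(\mathbb{F}) \nmid |G|$ and $W$ is a subrepresentation of $V$, there is a $G$-invariant complement $W'$ of $W$ in $V$, so that $V = W \oplus W'$ as representations of $G$. Given a $(G,\rho)$-covering subspace $U \leq W$ with $\textrm{codim}_W(U) = h_{G,\rho}(W)$, the natural candidate in $V$ is $U' := U \oplus W'$. First I would record that $\textrm{codim}_V(U') = \textrm{codim}_W(U) = h_{G,\rho}(W)$, since $\dim V = \dim W + \dim W'$ and $\dim U' = \dim U + \dim W'$ (or, in the infinite-dimensional case, one argues directly that $V/U' \cong W/U$ via the projection along $W'$).

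Next I would check that $U'$ is $(G,\rho)$-covering in $V$. Take any $v \in V$ and write $v = w + w'$ with $w \in W$, $w' \in W'$, uniquely. Since $U$ cyclically covers $W$ under $G$, there is some $g \in G$ with $\rho(g)(w) \in U$. Because $W'$ is $G$-invariant, $\rho(g)(w') \in W'$, and hence $\rho(g)(v) = \rho(g)(w) + \rho(g)(w') \in U \oplus W' = U'$. Thus every $v$ lies in some $\rho(g)^{-1}(U')$, i.e.\ in some $\rho(g^{-1})(U')$, so $\bigcup_{g \in G} \rho(g)(U') = V$. This exhibits a $(G,\rho)$-covering subspace of $V$ of codimension $h_{G,\rho}(W)$, whence $h_{G,\rho}(V) \geq h_{G,\rho}(W)$.

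The only real input beyond bookkeeping is Maschke's theorem, which supplies the $G$-invariant complement $W'$ and is exactly where the hypothesis $\textrm{char}(\mathbb{F}) \nmid |G|$ is used; everything else is the same direct-sum argument used at the end of the proofs of Theorems \ref{thm:main}, \ref{thm:general} and \ref{thm:pjc}. I would also note that when $h_{G,\rho}(W)$ is realised by \emph{some} covering subspace (which it is, by definition of the maximum, provided the maximum is attained — and for finite-dimensional $V$ over a finite field it clearly is), the argument goes through verbatim; in the infinite-dimensional setting one phrases it as: for every $\varepsilon$, any covering subspace of $W$ of codimension $\geq h_{G,\rho}(W) - \varepsilon$ yields one of $V$ of the same codimension, so the supremum over $V$ dominates that over $W$. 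The main (very mild) obstacle is simply making sure the codimension computation and the existence of the invariant complement are stated cleanly; there is no genuine difficulty.
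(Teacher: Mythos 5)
Your proposal is correct and follows essentially the same route as the paper: apply Maschke's theorem to obtain a $G$-invariant complement $W'$ of $W$ in $V$, take a minimal-dimension covering subspace $Z \leq W$, and check that $Z \oplus W'$ is a covering subspace of $V$ of the same codimension. You spell out the covering verification and the codimension count a bit more explicitly than the paper does, but there is no substantive difference in the argument.
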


\begin{proof}
As in the standard proof of Maschke's theorem, we can find a $G$-invariant subspace $W'$ of $V$ such that $V= W \oplus W'$. (Let $\{w_1,...,w_k\}$ be a basis for $W$, and extend it to a basis $\{w_1,...,w_m\}$ for $V$. Define $\pi : V \rightarrow W$ by $\pi (\sum_{i=1}^{m} {\lambda_i w_i}) = \sum_{i=1}^{k} {\lambda_i w_i}$, and define
\begin{equation*}
\bar{\pi} (w) = \frac{1}{|G|} \sum_{g \in G}{g^{-1} (\pi (g (w)))}.
\end{equation*}
Let $W' = \text{ker}(\bar{\pi})$; then $V = W \oplus W'$, and $W'$ is $G$-invariant, since if $v \in W'$ and $g \in G$ then $\bar{\pi} (g (v) ) = g(\bar{\pi}(v)) = 0$, so $g (v) \in W'$.)

Now let $Z \leq W$ be a $(G,\rho)$-covering subspace of $W$ with codimension $h_{G,\rho}(W)$, and let $U = Z \oplus W' \leq V$. It is easy to see that $U$ is a $(G,\rho)$-covering subspace of $V$; clearly, its codimension in $V$ is the same as that of $Z$ in $W$. Hence, $h_{G,\rho}(V) \geq \text{codim} (U) = \text{codim} (Z) = h_{G,\rho}(W)$, proving the lemma.
\end{proof}

We also have the following easy general upper bound.

\begin{lemma}
\label{lemma:upper-gen}
Let $G$ be a group, let $(\rho,V)$ be a representation of $G$, and suppose that $V = \bigoplus_{i} W_i$, where each $W_i$ is a $G$-invariant subspace of $V$. Then
$$h_{G,\rho}(V) \leq \sum_{i} h_{G,\rho}(W_i).$$
\end{lemma}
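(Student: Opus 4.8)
The plan is to reduce to the case of two summands and then induct on the number of summands. Assuming $V$ is finite-dimensional (so that only finitely many of the $W_i$ are nonzero, which is the case of interest here), I would write $V = W_1 \oplus V'$ with $V' := \bigoplus_{i \geq 2} W_i$, noting that $V'$ is again $G$-invariant. If one establishes $h_{G,\rho}(V) \leq h_{G,\rho}(W_1) + h_{G,\rho}(V')$, then since $h_{G,\rho}(V') \leq \sum_{i \geq 2} h_{G,\rho}(W_i)$ by the inductive hypothesis, the lemma follows. So it suffices to treat the case $V = W_1 \oplus W_2$ with $W_1, W_2$ both $G$-invariant.

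For the two-summand case, let $U \leq V$ be an arbitrary $(G,\rho)$-covering subspace; the goal is to bound $\mathrm{codim}_V(U)$. I would let $\pi : V \to W_1$ be the projection with kernel $W_2$; because $W_1$ and $W_2$ are $G$-invariant, $\pi$ is $G$-equivariant, i.e.\ $\pi \circ \rho(g) = \rho(g) \circ \pi$ for all $g \in G$. Set $U_1 := \pi(U) \leq W_1$ and $U_2 := U \cap W_2 \leq W_2$. The crux is to show that both are covering: (i) for $w \in W_1$, pick $g \in G$ and $u \in U$ with $w = \rho(g)(u)$; then $w = \pi(w) = \rho(g)(\pi(u)) \in \rho(g)(U_1)$; (ii) for $w \in W_2$, pick $g, u$ with $w = \rho(g)(u)$; then $0 = \pi(w) = \rho(g)(\pi(u))$, so $\pi(u) = \rho(g^{-1})(0) = 0$, whence $u \in W_2$, so $u \in U_2$ and $w \in \rho(g)(U_2)$.

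To conclude, I would apply the rank–nullity theorem to $\pi|_U : U \to W_1$: its image is $U_1$ and its kernel is $U \cap \ker\pi = U \cap W_2 = U_2$, so $\dim U = \dim U_1 + \dim U_2$. Hence
\[ \mathrm{codim}_V(U) = (\dim W_1 - \dim U_1) + (\dim W_2 - \dim U_2) = \mathrm{codim}_{W_1}(U_1) + \mathrm{codim}_{W_2}(U_2) \leq h_{G,\rho}(W_1) + h_{G,\rho}(W_2), \]
where the inequality uses (i), (ii) and the definition of $h_{G,\rho}$. Since $U$ was arbitrary, $h_{G,\rho}(V) \leq h_{G,\rho}(W_1) + h_{G,\rho}(W_2)$, completing the two-summand case and, via the induction above, the proof.

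I do not anticipate a genuine obstacle: everything is bookkeeping with dimensions and equivariant projections. The one point that actually uses the hypothesis that $G$ is a group (rather than merely a monoid acting on $V$) is step (ii), where $g^{-1}$ is needed to conclude $\pi(u) = 0$; this is the only place where a little care is required.
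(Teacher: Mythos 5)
Your proof is correct, but it takes a genuinely different route from the paper's. The paper argues directly with all summands at once: it takes a covering subspace $U$ of maximal codimension, sets $W_i^* := W_i\cap U$, observes (using $G$-invariance of $W_i$ and the group property) that each $W_i^*$ covers $W_i$, and then concludes from the inclusion $\bigoplus_i W_i^* \leq U$ that $\mathrm{codim}(U)\leq\sum_i\mathrm{codim}(W_i^*)\leq\sum_i h_{G,\rho}(W_i)$. No induction, no projection, no rank--nullity. Your version instead reduces to two summands and uses the $G$-equivariant projection $\pi:V\to W_1$ together with rank--nullity applied to $\pi|_U$. This is slightly longer but gives the \emph{exact} identity $\dim U=\dim\pi(U)+\dim(U\cap W_2)$, whereas the paper's inclusion only yields an inequality (which is of course all that is needed). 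One small asymmetry worth noting in your version: you must take $U_1=\pi(U)$ rather than $U\cap W_1$ for rank--nullity to apply cleanly; if you had used intersections for both factors, as the paper does, you would need the inclusion argument rather than the dimension identity. Your observation about where invertibility of $\rho(g)$ is needed is correct, and in fact the paper's proof relies on it too (to deduce $u=\rho(g^{-1})(w)\in W_i$), so this is not unique to your route.
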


\begin{proof}
Let $U$ be a $(G,\rho)$-covering subspace of $V$ with codimension $h_{G,\rho}(V)$, and let $W_{i}^{*} = W_i \cap U$ for each $i$. Then $W_{i}^{*}$ is clearly a $(G,\rho)$-covering subspace of $W_i$, and therefore $\text{codim}(W_{i}^{*}) \leq h_{G,\rho}(W_i)$. 

We have $\bigoplus_i {W_{i}^{*} \leq U \leq V}$, and therefore
$$h_{G,\rho}(V) = \text{codim}(U) \leq \text{codim}\left(\bigoplus_i W_{i}^{*}\right) = \sum_i {\text{codim}(W_{i}^{*})} \leq \sum_i {h_{G,\rho}(W_i)},$$
proving the lemma.
\end{proof}

The following is an immediate corollary of (\ref{eq:gen-orbit-counting}) and Lemmas \ref{lemma:lower-gen} and \ref{lemma:upper-gen}.
\begin{corollary}
\label{cor:combined-bounds}
Let $q$ be a prime power, let $G$ be a finite group with order coprime to $q$, let $V$ be a finite-dimensional vector space over $\mathbb{F}_q$, and let $(\rho,V)$ be a representation of $G$. Let $V = \bigoplus_{i}{W_i}$ be a decomposition of $V$ into subrepresentations. Then
\begin{equation}
\label{eq:combined}
\max_{i}\{h_{G,\rho}(W_i)\} \leq h_{G,\rho}(V) \leq \min{\{\sum_{i} {h_{G,\rho}(W_i)},\left \lfloor\log_q(|G|) \right \rfloor \}}
\end{equation}
\end{corollary}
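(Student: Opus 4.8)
The plan is to assemble the corollary directly from the three ingredients cited, with no new ideas required; the only mild care needed is in checking the hypotheses of Lemma~\ref{lemma:lower-gen}.

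First, for the lower bound $\max_i \{h_{G,\rho}(W_i)\} \leq h_{G,\rho}(V)$: fix an index $i$. Since $W_i$ is a $G$-invariant summand of $V$, it is in particular a subrepresentation of $V$. The hypothesis that $|G|$ is coprime to $q = \mathrm{char}(\mathbb{F}_q)$ means $\mathrm{char}(\mathbb{F}_q) \nmid |G|$, so Lemma~\ref{lemma:lower-gen} applies and gives $h_{G,\rho}(W_i) \leq h_{G,\rho}(V)$. Taking the maximum over $i$ yields the left-hand inequality of (\ref{eq:combined}). Note that here, unlike in the upper bound, we do not even use that the $W_i$ together span $V$ --- only that each $W_i$ is a subrepresentation.

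Second, for the upper bound: Lemma~\ref{lemma:upper-gen}, applied to the given decomposition $V = \bigoplus_i W_i$, gives $h_{G,\rho}(V) \leq \sum_i h_{G,\rho}(W_i)$ directly (this lemma requires no coprimality hypothesis). Separately, inequality~(\ref{eq:gen-orbit-counting}) --- which holds because $G$ acts on the finite set $V$ with every orbit of size at most $|G|$, so that any $(G,\rho)$-covering subspace $U$ satisfies $|U| \geq q^{\dim V}/|G|$ --- gives $h_{G,\rho}(V) \leq \lfloor \log_q |G| \rfloor$. Since $h_{G,\rho}(V)$ is at most each of the two quantities $\sum_i h_{G,\rho}(W_i)$ and $\lfloor \log_q |G| \rfloor$, it is at most their minimum, which is the right-hand inequality of (\ref{eq:combined}).

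Combining the two displays completes the proof. There is no genuine obstacle here: the content is entirely contained in Lemmas~\ref{lemma:lower-gen} and~\ref{lemma:upper-gen} and in~(\ref{eq:gen-orbit-counting}), and the corollary is simply the observation that these three facts can be packaged into a single chain of inequalities. The one point worth flagging explicitly --- so the reader sees why the hypotheses are phrased as they are --- is that the coprimality of $|G|$ with $q$ is needed only for the lower bound (via the Maschke-type splitting used in the proof of Lemma~\ref{lemma:lower-gen}), whereas both halves of the upper bound are unconditional.
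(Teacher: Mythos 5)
Your proof is correct and follows exactly the route the paper intends: the corollary is stated there as an immediate consequence of inequality (\ref{eq:gen-orbit-counting}), Lemma \ref{lemma:lower-gen}, and Lemma \ref{lemma:upper-gen}, with no further argument given. One small slip worth noting: you write ``$q=\mathrm{char}(\mathbb{F}_q)$'', which is only true when $q$ is prime; in general $\mathrm{char}(\mathbb{F}_q)=p$ where $q=p^e$, but the implication you need ($\gcd(|G|,q)=1 \Rightarrow \mathrm{char}(\mathbb{F}_q)\nmid |G|$) still holds since $q$ is a power of $p$, so the argument stands.
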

We remark that, under the hypotheses of Corollary \ref{cor:combined-bounds}, Maschke's theorem guarantees the existence of a decomposition of $V = \bigoplus_{i}{W_i}$ where each $W_i$ is an irreducible subrepresentation of $V$.

Theorem \ref{thm:main} implies that for the rotation action $\rho_{\sigma}$ of $C_{q^d-1}$ on $\mathbb{F}_q^{q^d-1}$, the lower bound in (\ref{eq:combined}) is tight, as is the $\left \lfloor \log_q (|G|) \right \rfloor$ upper bound, for all $d \in \mathbb{N}$. We remark that there are infinitely many (nontrivial) cases where the sum bound is sharp, and distinct from both the lower bound and the $\log_q (|G|)$ bound. Indeed, let $m_1,...,m_k \in \mathbb{N}$ be chosen such that $\text{gcd}(2^{m_i} - 1,2^{m_j} - 1) = 1$ for all $i \neq j$. (For infinitely many examples of such choices, one may take the $m_i$'s to be distinct primes, since if $d \mid 2^a -1$ and $d \mid 2^b - 1$ then $d \mid 2^{\text{gcd}(a,b)} - 1$.) For each $i\in[k]$, let $n_i = 2^{m_i}-1$, let $n = \prod_{i=1}^k n_i$, let $\omega_i \in \overline{\mathbb{F}}_2$ be a primitive $n_i$th root of unity and let $f_i(X) \in \mathbb{F}_2[X]$ be the minimal polynomial of $\omega_i$. As in the proof of Theorem \ref{thm:main}, $f_i(X)$ divides $X^{n_i}-1$ and has degree $m_i$. For each $i \in [k]$, let $W_i = \mathbb{F}_2[X]/\langle f_i(X) \rangle$ be the representation of $C_n$ where the generator of $C_n$ acts by multiplication by $X$; note that $\dim(W_i) = m_i$ for all $i$. Let
\begin{equation}
\label{eq:direct-sum}
V=\bigoplus_{i=1}^k W_i
\end{equation}
be the direct sum of these representations. By the Chinese Remainder Theorem for rings, as in the proof of Theorem \ref{thm:main}, $V$ can be identified with a subspace $\theta^{-1}(V)$ of $\mathbb{F}_2[X]/ \langle X^n - 1\rangle$, where $\theta$ is the linear isomorphism defined in (\ref{eq:theta}); under this identification, the generator of $C_n$ again acts by multiplication by $X$. It follows from the proof of Theorem \ref{thm:main} that $h_{C_n}(W_i) = m_i-1$ for each $i \in [k]$; indeed, the action of $C_n$ is transitive on $W_i\setminus \{0\}$. For each $i \in [k]$, let $u_i(X) \in \theta^{-1}(V)$ such that $u_i(X) \equiv 1$ modulo $f_i(X)$ and $u_i(X) \equiv 0$ modulo $f_j(X)$ for all $j \neq i$, and let $U = \text{span}\{u_1(X),\ldots,u_k(X)\} \leq V$. It is easy to see, using the Chinese Remainder Theorem (and the fact that $|W_i \setminus \{0\}| = n_i$ is coprime to $|W_j \setminus \{0\}| = n_j$ for $i \neq j$), that $U$ is a cyclically covering subspace of $\theta^{-1}(V)$; it has codimension
$$\dim(V)-k = \sum_{i=1}^{k}m_i-k = \sum_{i=1}^{k}(m_i-1) = \sum_{i = 1}^k h_{C_n}(W_i),$$
showing that the sum bound is sharp, for the direct sum in (\ref{eq:direct-sum}). It is easy to check that the sum bound here is also distinct from the $\log_q(|G|)$ bound, whenever $m_i \geq 2$ for all $i$.

\vspace{0.4cm}

In a forthcoming paper, the last two authors investigate the behaviour of $h_{S_n,\rho}(V)$, for various representations $(\rho,V)$ of the symmetric group $S_n$.

\section{Cases in which the covering subspaces are trivial}

In this section, we demonstrate the opposite behaviour to that seen in Theorems \ref{thm:main} and \ref{thm:pjc} for other sequences of integers.
\begin{theorem}
\label{thm:prime-power}
Let $p$ be a prime, let $q$ be a power of $p$, let $k \in \mathbb{N}$ with $k \mid q-1$, and let $d \in \mathbb{N}$. Then
\begin{equation*}
h_q(k p^d) = 0.
\end{equation*}
Equivalently, if $U \leq \mathbb{F}_q^{k p^d}$ is cyclically covering, then $U = \mathbb{F}_q^{k p^d}$.
\end{theorem}

In fact, Theorem \ref{thm:prime-power} is a special case of the following result for more general representations.
\begin{theorem}
\label{thm:prime-power-general}
Let $p$ be a prime. Let $G = A \times B$, where $A$ is an Abelian group of exponent $k$ dividing $q-1$, and $B$ is a finite $p$-group. Let $q$ be a power of $p$, let $V$ be a finite-dimensional vector space over $\mathbb{F}_q$ and let $(\rho,V)$ be a representation of $G$. Then $h_{G,\rho}(V)=0$.
\end{theorem}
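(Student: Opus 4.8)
The plan is to dispose of the Abelian factor $A$ by an isotypic decomposition, reduce the problem to the case $G=B$ a $p$-group, and then settle that case by an induction on $\dim V$ whose only real input is the fact that a $p$-group acting on a nonzero $\mathbb{F}_q$-vector space in characteristic $p$ has a nonzero fixed vector.

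First I would handle $A$. Since the exponent $k$ of $A$ divides $q-1$, the polynomial $X^k-1$ is separable over $\mathbb{F}_q$ (as $p\nmid k$) and all its roots lie in $\mathbb{F}_q$ (as $k\mid q-1$, and $\mathbb{F}_q^\times$ is cyclic of order $q-1$), so $X^k-1$ splits into $k$ distinct linear factors over $\mathbb{F}_q$. As $\rho(a)^k=\mathrm{Id}$ for each $a\in A$, every $\rho(a)$ is diagonalisable over $\mathbb{F}_q$, and since the operators $\{\rho(a):a\in A\}$ pairwise commute, they are simultaneously diagonalisable. This gives $V=\bigoplus_\chi V_\chi$, where $\chi$ ranges over the $\mathbb{F}_q^\times$-valued characters of $A$ and $V_\chi=\{v\in V:\rho(a)v=\chi(a)v\text{ for all }a\in A\}$. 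Because $A$ and $B$ commute in $G$, each $V_\chi$ is $B$-invariant, hence $G$-invariant, so this is a decomposition into subrepresentations, and by Lemma~\ref{lemma:upper-gen} we get $h_{G,\rho}(V)\leq\sum_\chi h_{G,\rho}(V_\chi)$. Thus it suffices to show $h_{G,\rho}(V_\chi)=0$ for each $\chi$. On $V_\chi$, each $\rho(a)$ acts as the scalar $\chi(a)$, so $\rho(a)(U)=U$ for every subspace $U\leq V_\chi$; hence $\bigcup_{g\in G}\rho(g)(U)=\bigcup_{b\in B}\rho(b)(U)$, and $U$ is $(G,\rho)$-covering in $V_\chi$ if and only if it is covering for the $B$-action alone.

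The theorem therefore reduces to the following claim: if $B$ is a finite $p$-group, $W$ is a finite-dimensional $\mathbb{F}_q$-vector space with $q$ a power of $p$, $(\pi,W)$ is a representation of $B$, and $U\leq W$ satisfies $\bigcup_{b\in B}\pi(b)(U)=W$, then $U=W$. I would prove this by induction on $\dim W$, the case $\dim W=0$ being vacuous. For $\dim W\geq 1$, the $p$-group $B$ acts on the set $W$ with $|W|=q^{\dim W}\equiv 0\pmod p$, so the fixed subspace $W^B$ has $|W^B|\equiv 0\pmod p$ and contains $0$, giving $W^B\neq\{0\}$. The covering hypothesis says every $B$-orbit on $W$ meets $U$; applied to the singleton orbits $\{w\}$ with $w\in W^B$, this forces $W^B\subseteq U$. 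If $W^B=W$ then $B$ acts trivially and $U=\bigcup_b\pi(b)(U)=W$. Otherwise $W^B$ is a nonzero proper $B$-submodule, so $\bar W:=W/W^B$ has $\dim\bar W<\dim W$; since the quotient map $W\to\bar W$ is $B$-equivariant and surjective, the image $\bar U$ of $U$ is covering in $\bar W$, so by the inductive hypothesis $\bar U=\bar W$, i.e.\ $U+W^B=W$, whence $U=W$ as $W^B\subseteq U$. Applying this with $W=V_\chi$ and $B$ the $p$-group gives $h_{G,\rho}(V_\chi)=0$ for every $\chi$, and hence $h_{G,\rho}(V)=0$.

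The only non-formal ingredient is the $p$-group claim, whose crux is the elementary fixed-point fact $W^B\neq\{0\}$ in characteristic $p$; the rest is routine bookkeeping with the isotypic decomposition and the sum bound of Lemma~\ref{lemma:upper-gen}. The one point that must not be glossed over is the step $W^B\subseteq U$ (singleton orbits can only meet $U$ by being contained in it) together with the equivariance of the quotient map, which is what makes the induction close.
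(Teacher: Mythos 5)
Your proposal is correct, and its treatment of the $p$-group core is genuinely different from the paper's. The reduction from $G=A\times B$ to $B$ alone is essentially the same in both: diagonalise the commuting $A$-action simultaneously (possible because $X^k-1$ splits separably over $\mathbb{F}_q$ when $k\mid q-1$), observe that $B$ fixes each $A$-eigenspace, and note that $A$ acts by scalars on each eigenspace and hence fixes every subspace of it. (You route this through Lemma~\ref{lemma:upper-gen}, whereas the paper's Lemma~\ref{lemma:peterII} argues directly that some eigenspace $W$ would have $U\cap W$ a proper covered subspace; the two are interchangeable.) Where you diverge is in the core claim about a $p$-group $B$ acting on a finite $\mathbb{F}_q$-space $W$. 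The paper proves the more general Lemma~\ref{lemma:peter} — a $p$-group $Q$ acting by automorphisms on a finite $p$-group $P$, with induction on $|P|$ and the Frattini-quotient count $(p^d-1)/(p-1)$ of index-$p$ subgroups to produce a $Q$-invariant $P_1 \le P$ of index $p$ — and then specialises $P$ to the additive group of $W$. You instead induct on $\dim W$ using only the elementary fact that a $p$-group acting on a nonzero $\mathbb{F}_q$-space in characteristic $p$ has a nonzero fixed vector, then pass to $W/W^B$; the step $W^B\subseteq U$ (singleton orbits must lie inside $U$) and the $B$-equivariance of the quotient map make the induction close cleanly. Your version is shorter and self-contained in the linear setting; the paper's version is stated for arbitrary $p$-groups $P$, which the authors flag as being of independent interest beyond the vector-space case, but costs the Frattini-theoretic input. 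Both are complete proofs.
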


Theorem \ref{thm:prime-power-general}, in turn, is a consequence of the following two lemmas, which may be of independent interest.
\begin{lemma}
\label{lemma:peter}
Suppose that a finite $p$-group $Q$ acts on a finite $p$-group $P$ by automorphisms. If
$H$ is a subgroup of $P$ such that $\displaystyle{\bigcup_{g\in Q}H^g=P}$,
then $H=P$. (Here, as usual, $H^g$ denotes the image of $H$ under the automorphism defined by $g$.)
\end{lemma}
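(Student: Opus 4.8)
The plan is to induct on $|P|$, the base case $|P|=1$ being vacuous. So assume $P\neq 1$. The first step is to produce a $Q$-invariant central subgroup $N\leq P$ of order exactly $p$. To do this I would consider $\Omega := \{x\in Z(P) : x^p=1\}$, the subgroup of elements of order dividing $p$ in the centre of $P$: it is a nontrivial elementary abelian $p$-group, and, being characteristic in $P$, it is $Q$-invariant. The $p$-group $Q$ acts on $\Omega$, viewed as a finite-dimensional $\mathbb{F}_p$-vector space; since the set of $Q$-fixed points of $\Omega$ has size congruent to $|\Omega|\equiv 0 \pmod p$ (standard orbit-counting for a $p$-group action) and contains $1$, it has size at least $p$, so $Q$ fixes some $z\in\Omega\setminus\{1\}$. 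Take $N=\langle z\rangle$; then $|N|=p$, $N$ is central in $P$ (hence normal), and $N$ is fixed pointwise by $Q$.

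Next I would pass to the quotient $\overline{P}:=P/N$, on which $Q$ acts by automorphisms (the action being well-defined since $N$ is $Q$-invariant), and set $\overline{H}:=HN/N$. The natural projection $\pi:P\to\overline{P}$ is a surjection intertwining the $Q$-actions, and for each $g\in Q$ it sends $H^g$ onto $\overline{H}^{\,g}$; hence the hypothesis $\bigcup_{g\in Q}H^g=P$ yields $\bigcup_{g\in Q}\overline{H}^{\,g}=\overline{P}$. Since $\overline{P}$ is a $p$-group of strictly smaller order, the inductive hypothesis gives $\overline{H}=\overline{P}$, that is, $HN=P$.

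It then remains to upgrade $HN=P$ to $H=P$. If $N\leq H$ then $H=HN=P$ and we are done, so suppose $N\not\leq H$. As $|N|=p$ is prime, this forces $H\cap N=\{1\}$. Now pick $z_0\in N\setminus\{1\}$: by the covering hypothesis there is $g\in Q$ with $z_0\in H^g$, so $g^{-1}(z_0)\in H$; but $g^{-1}$ fixes $N$ pointwise, so $g^{-1}(z_0)=z_0\in H\cap N=\{1\}$, a contradiction. Hence $N\leq H$ after all, which completes the induction.

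I do not expect a real obstacle. The one place that needs a little care is the first step: one cannot simply take $N=Z(P)$ (which need not have order $p$) or an arbitrary subgroup of $Z(P)$ of order $p$ (which need not be $Q$-invariant), which is why one routes through the characteristic subgroup $\Omega$ and invokes the fact that a nontrivial $p$-group acting on a nonzero finite $\mathbb{F}_p$-vector space has a nonzero fixed point. Checking that the covering property descends to the quotient is routine, and everything else is bookkeeping.
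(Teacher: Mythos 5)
Your proof is correct, but it takes a genuinely different route from the paper's. The paper also inducts on $|P|$, but goes \emph{downward}: using the Frattini quotient $P/\Phi(P)$, it counts that there are $(p^d-1)/(p-1)$ maximal (index-$p$) subgroups of $P$, a number coprime to $p$, so the $p$-group $Q$ acting on them must fix one, say $P_1$; then $\bigcup_{g\in Q}(H\cap P_1)^g=P_1$, the inductive hypothesis gives $P_1\le H$, and since $H=P_1$ would make the union equal $P_1\neq P$, we conclude $H=P$. You instead go \emph{upward}: you locate a subgroup $N\le\Omega_1(Z(P))$ of order $p$ that is fixed \emph{pointwise} by $Q$ (using the standard fact that a $p$-group acting on a nonzero $\mathbb{F}_p$-vector space has a nonzero fixed vector), quotient by $N$, apply induction to get $HN=P$, and finish by showing $N\le H$ directly via the pointwise-fixedness of $N$. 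The two arguments are in some sense dual — the paper descends to a $Q$-invariant maximal subgroup, you ascend to a quotient by a $Q$-invariant minimal normal subgroup — and both are standard devices in $p$-group theory. One small advantage of your version is that getting pointwise fixedness of $N$ (rather than merely setwise invariance, which is all the Frattini argument yields for $P_1$) makes the last step a clean one-line contradiction; a small advantage of the paper's version is that it avoids having to verify that the covering hypothesis descends to the quotient.
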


\begin{proof} The proof is by induction on $|P|$. The result is clear
if $|P|=1$, so suppose that $|P|>1$ and that the result holds for all smaller $p$-groups.

Write $\Phi(P)$ for the Frattini subgroup of $P$, i.e.\ the intersection of all maximal subgroups of $P$. The number of subgroups of index $p$ in $P$ is equal to $(p^d-1)/(p-1)$, where $d \in \mathbb{N}$ is such that $|P/\Phi(P)|=p^d$. (This is well-known, and follows from the facts that a subgroup of
index $p$ in $P$ is normal, and $\Phi(P)$ is the minimal normal subgroup of
$P$ with elementary abelian quotient; see e.g.\ \cite[1.D.8]{isaacs}). Observe that $Q$ acts by automorphisms on the set of all index-$p$ subgroups of $P$. Since $Q$ is a $p$-group, the orbit-stabilizer theorem implies that every orbit of this action has size a power of $p$. Since the number of index-$p$ subgroups, $(p^d-1)/(p-1)$, is coprime to $p$, one of these orbits has size one. In other words, some index-$p$ subgroup, $P_1$ say, is fixed by $Q$. Therefore,
\[P_1=\bigcup_{g\in Q}(H\cap P_1)^g.\]
By the induction hypothesis, $P_1\cap H=P_1$, so $P_1\le H$. Since $P_1$ is fixed by $Q$, we cannot have $H=P_1$. It follows that $H=P$, completing the induction step, proving the lemma.
\end{proof}

\begin{lemma}
\label{lemma:peterII}
Let $p$ be prime, and let $q$ be a power of $p$. Let $G=A\times B$, where $A$ is an Abelian group of 
exponent $k$ dividing $q-1$, and $B$ is a finite $p$-group. Let $G$ act linearly on a vector
space $V$ over $\mathbb{F}_q$, where the action by $B$ is by automorphisms of $V$, and let $U$ be a subspace of $V$ such that the
union of the images of $U$ under $G$ cover $V$. Then $U=V$.
\end{lemma}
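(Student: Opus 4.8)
The plan is to diagonalise the action of $A$ over $\mathbb{F}_q$ and thereby reduce the statement to Lemma~\ref{lemma:peter}, which already contains the $p$-group core of the argument.

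First I would set up a weight-space decomposition of $V$ for the action of $A$. This is exactly where the hypothesis $k\mid q-1$ is needed: since $\mathbb{F}_q^\times$ is cyclic of order $q-1$, the polynomial $X^k-1$ splits into distinct linear factors over $\mathbb{F}_q$, so every $a\in A$ (which satisfies $\rho(a)^k=\mathrm{Id}$) acts diagonalisably over $\mathbb{F}_q$; since $A$ is Abelian, the operators $\{\rho(a):a\in A\}$ form a commuting family of diagonalisable operators and so are simultaneously diagonalisable. This produces a decomposition $V=\bigoplus_{\chi\in X}V_\chi$ into common eigenspaces, indexed by the (finite) set $X$ of characters $\chi\colon A\to\mathbb{F}_q^\times$ for which $V_\chi:=\{v\in V:\rho(a)v=\chi(a)v\text{ for all }a\in A\}$ is nonzero. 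Because $A$ and $B$ commute inside $G$, each $V_\chi$ is $B$-invariant.

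Next I would show that for each $\chi\in X$ the subspace $U\cap V_\chi$ has the property that its $B$-translates cover $V_\chi$. Given $v\in V_\chi$, the covering hypothesis provides $a\in A$ and $b\in B$ with $\rho(a)\rho(b)v\in U$; since $\rho(b)v$ still lies in $V_\chi$ we have $\rho(a)\bigl(\rho(b)v\bigr)=\chi(a)\rho(b)v$, and as $U$ is an $\mathbb{F}_q$-subspace the scalar $\chi(a)$ may be divided out, giving $\rho(b)v=\chi(a)^{-1}\bigl(\rho(a)\rho(b)v\bigr)\in U\cap V_\chi$. Hence $v\in\rho(b)^{-1}(U\cap V_\chi)$, and letting $v$ vary we obtain $\bigcup_{b\in B}\rho(b)(U\cap V_\chi)=V_\chi$ (the reverse inclusion being immediate from $B$-invariance of $V_\chi$).

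Finally I would apply Lemma~\ref{lemma:peter} on each $V_\chi$: the additive group $(V_\chi,+)$ is a finite $p$-group since $q$ is a power of $p$, the finite $p$-group $B$ acts on it by additive-group automorphisms (the restrictions to $V_\chi$ of the linear maps $\rho(b)$), and $U\cap V_\chi$ is a subgroup whose $B$-translates cover $V_\chi$. The lemma then forces $U\cap V_\chi=V_\chi$, i.e.\ $V_\chi\subseteq U$; since this holds for all $\chi\in X$ and $V=\bigoplus_{\chi\in X}V_\chi$, it follows that $U=V$. I expect essentially all of the difficulty to be conceptual rather than technical — recognising that the decomposition must take place over $\mathbb{F}_q$ itself (so that one cannot naively base-change to $\overline{\mathbb{F}}_q$), and that trading the group element $ab$ for $b$ costs only a scalar, which a subspace absorbs for free; with those two observations in hand the proof is short.
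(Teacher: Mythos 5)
Your argument is correct and is essentially the paper's proof: simultaneously diagonalise $A$ over $\mathbb{F}_q$ (using $k\mid q-1$ to get splitting into distinct linear factors), observe that $B$ preserves each common eigenspace, absorb the scalar action of $A$ into the subspace $U$ so that only $B$-translates are needed on each eigenspace, and invoke Lemma~\ref{lemma:peter}. The only difference is cosmetic — the paper presents the last step as a contradiction after first disposing of the case $k=1$, whereas you argue directly eigenspace by eigenspace.
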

\begin{proof}
In the case $k=1$, this follows from Lemma \ref{lemma:peter}, applied with $Q=B$ and $P$ the additive group of $\mathbb{F}_q$ (noting that the representation action of $B$ on $V$ corresponds to an action on $P$ by automorphisms). Suppose then that $k > 1$.

Every element of $A$ (viewed as a linear endomorphism of $V$) has minimum polynomial dividing $X^k-1$, which has
$k$ distinct roots in $\mathbb{F}_q$ (since $X^{k}-1$ divides $X^{q-1}-1$, which has $q-1$ distinct roots in $\mathbb{F}_q$), so the elements of $A$ are all
diagonalisable. Since $A$ is Abelian, its elements can be simultaneously
diagonalised, so $V$ is the direct sum of the common eigenspaces. Since $B$
commutes with $A$, it fixes each of these eigenspaces, and therefore so does $G$. If $U$ contains all
of the eigenspaces, then we have $U=V$, as required. Hence, we may assume that 
$U\cap W\subset W$ for some eigenspace $W$. Then the images under $G$ of
$U\cap W$ cover $W$. However, every element of $A$ acts as a scalar on $W$,
and so fixes every subspace of $W$. So the images of $U\cap W$ under $B$ cover $W$.
The result for $k=1$ now implies that $U\cap W=W$, contrary to our assumption.
\end{proof} 

\begin{proof}[Proof of Theorem \ref{thm:prime-power-general}.]
Let $p$ be a prime. Let $G = A \times B$, where $A$ is an Abelian group of exponent $k$ dividing $q-1$, and $B$ is a finite $p$-group. Let $q$ be a power of $p$, let $V$ be a finite-dimensional vector space over $\mathbb{F}_q$ and let $(\rho,V)$ be a representation of $G$.  Let $U \leq V$ such that $\bigcup_{g \in G} \rho(g)(U) = V$. By Lemma \ref{lemma:peterII}, we must have $U=V$, proving the theorem.
\end{proof}

Theorem \ref{thm:prime-power} follows quickly from Theorem \ref{thm:prime-power-general}.
\begin{proof}[Proof of Theorem \ref{thm:prime-power}.]
If $k \mid q-1$, then $(k,p)=1$ so $C_{kp^d} \cong C_k \times C_{p^d}$. The group $C_k$ has exponent $k$, and the group $C_{p^d}$ is a $p$-group, so we can apply Theorem \ref{thm:prime-power-general} with $V=\mathbb{F}_q^n$, yielding Theorem \ref{thm:prime-power}.
\end{proof}

We remark that Theorem \ref{thm:prime-power}, combined with some of our previous lemmas, determines completely the zeros of $h_2$.

\begin{corollary}
We have $h_2 (n) = 0$ if and only if $n = 2^d$ for some $d \in \mathbb{N} \cup \{0\}$, and $h_2(n)=1$ if and only if $n=3$.
\end{corollary}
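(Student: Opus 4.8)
The plan is to prove the two biconditionals separately. In each case the ``if'' direction is immediate from results already established, and the ``only if'' direction follows from the lower bounds in Lemmas \ref{lemma:simple} and \ref{lma:products}, together with one extra fact, $h_2(6)=2$, whose verification is the only nontrivial point.

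For the zeros of $h_2$: if $n=2^d$ with $d\geq 1$, then Theorem \ref{thm:prime-power}, applied with $q=2$ and $k=1$ (legitimate since $1\mid 2-1$), gives $h_2(2^d)=0$; and $h_2(1)=0$ trivially, since the only cyclically covering subspace of $\mathbb{F}_2^1$ is $\mathbb{F}_2^1$ itself. For the converse, suppose $n$ is not a power of $2$ and write $n=2^a m$ with $m$ odd and $m\geq 3$. Since $m\mid n$, Lemma \ref{lma:products} gives $h_2(n)\geq h_2(m)$, and $h_2(m)\geq 1$: indeed, if $m>3$ then $h_2(m)\geq 2$ by Lemma \ref{lemma:simple}, while if $m=3$ then $h_2(m)=h_2(3)=1$ (as recorded just after Lemma \ref{lemma:simple}). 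Hence $h_2(n)>0$, which together with the previous sentence gives the first biconditional.

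For the second biconditional, one direction is again the equality $h_2(3)=1$. Conversely, suppose $h_2(n)=1$. By the first biconditional $n$ is not a power of $2$, so write $n=2^a m$ with $m$ odd and $m\geq 3$. If $m>3$, then $h_2(n)\geq h_2(m)\geq 2$ by Lemmas \ref{lma:products} and \ref{lemma:simple}, a contradiction; hence $m=3$ and $n=2^a\cdot 3$. If $a\geq 1$, then $6\mid n$, so $h_2(n)\geq h_2(6)$ by Lemma \ref{lma:products}; granting $h_2(6)=2$, this contradicts $h_2(n)=1$, so $a=0$ and $n=3$.

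It remains to establish the claim $h_2(6)=2$, which is the main obstacle. The upper bound $h_2(6)\leq\lfloor\log_2 6\rfloor=2$ is immediate from Lemma \ref{lemma:simple-upper}, so it suffices to exhibit a cyclically covering subspace of $\mathbb{F}_2^6$ of codimension $2$. I would argue through the identification $\mathbb{F}_2^6\cong\mathbb{F}_2[X]/\langle X^6-1\rangle$, under which $\sigma$ is multiplication by $X$; since $X^6-1=(X+1)^2(X^2+X+1)^2$ over $\mathbb{F}_2$ and the two factors are coprime, the Chinese Remainder Theorem gives a $\sigma$-invariant decomposition $\mathbb{F}_2^6\cong A\oplus C$ with $A=\mathbb{F}_2[X]/\langle(X+1)^2\rangle$ and $C=\mathbb{F}_2[X]/\langle(X^2+X+1)^2\rangle$. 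The ring $C$ is local, with maximal ideal $\mathfrak{m}$ of size $4$ and residue field $C/\mathfrak{m}\cong\mathbb{F}_4$, and $X$ has multiplicative order $6$ in $C$; hence the orbits of $\langle X\rangle$ on $C$ are $\{0\}$, the set $\mathfrak{m}\setminus\{0\}$ of size $3$, and two orbits of units of size $6$ each, and each unit-orbit maps onto $(C/\mathfrak{m})^{\times}$ under the quotient map $C^{\times}\to(C/\mathfrak{m})^{\times}$. Picking units $v_1,v_2\in C$ that lie in different $\langle X\rangle$-orbits but have the same nonzero image in $C/\mathfrak{m}$, the $\mathbb{F}_2$-span of $\{v_1,v_2\}$ is $2$-dimensional and meets all four orbits, since $v_1+v_2$ is a nonzero element of $\mathfrak{m}$; so this span cyclically covers $C$ with codimension $2$ in $C$. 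Taking its direct sum with all of $A$ gives a subspace of $\mathbb{F}_2^6$ of codimension $2$ that is still cyclically covering, because the $A$-component of any vector stays inside $A$ under every power of $\sigma$, so only the condition on the $C$-component matters. This yields $h_2(6)\geq 2$, hence $h_2(6)=2$. (Alternatively, one can simply check directly that $\mathbb{F}_2^6$ has a cyclically covering subspace of codimension $2$.)
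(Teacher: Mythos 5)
Your proof is correct and follows essentially the same route as the paper's: use Theorem \ref{thm:prime-power} (with $k=1$) for $h_2(2^d)=0$; observe $h_2(3)=1$; for $n$ not a power of $2$ write $n=2^a m$ with $m\geq 3$ odd and apply Lemmas \ref{lma:products} and \ref{lemma:simple}; and handle the residual case $n=2^a\cdot 3$ with $a\geq 1$ via $h_2(6)=2$. The one place where you genuinely add something is the verification of $h_2(6)=2$: the paper simply asserts ``it can be checked,'' leaving it to a finite (and small) search over subspaces of $\mathbb{F}_2^6$, whereas you give a structural proof. Your argument -- decomposing $\mathbb{F}_2[X]/\langle X^6-1\rangle$ via CRT as $A\oplus C$ with $C=\mathbb{F}_2[X]/\langle (X^2+X+1)^2\rangle$, computing the $\langle X\rangle$-orbits on the local ring $C$ (one of size $1$, one of size $3$ inside the maximal ideal, two unit orbits of size $6$), and choosing two units $v_1,v_2$ in distinct orbits with equal image in $C/\mathfrak{m}\cong\mathbb{F}_4$ so that $\{0,v_1,v_2,v_1+v_2\}$ meets all four orbits -- is sound and fits naturally with the paper's own polynomial/CRT machinery from Theorems \ref{thm:main}--\ref{thm:pjc}. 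The small technical points (that $X$ has order $6$ in $C$; that $\mathfrak{m}\setminus\{0\}$ is a single orbit, since a fixed nonzero element would force $(X^2+X+1)\mid (X+1)r(X)$ and hence be zero; that each unit orbit surjects onto $(C/\mathfrak{m})^\times$ because $\bar{X}$ generates $\mathbb{F}_4^\times$; and that taking the direct sum with $A$ preserves the covering property because $A$ is $\sigma$-invariant) all check out. What you gain is a conceptual, hand-checkable proof in place of an exhaustive verification; the tradeoff is length, and the paper's terse ``it can be checked'' is arguably appropriate for a routine small case.
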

\begin{proof}
Applying Theorem \ref{thm:prime-power} with $k=1$ yields $h_2(2^d) = 0$ for all $d \in \mathbb{N}$. Trivially, $h_2(1)=0$, and it is easy to see that $h_2(3)=1$. If $n>3$ and $n$ is not a power of 2, then $n$ is either divisible by 6 or by some odd number greater than $3$. Let $m$ be such a divisor. Lemma \ref{lemma:simple} implies that $h_2(m) \geq 2$ for all odd $m > 3$, and it can be checked that $h_2(6)=2$. Hence, by Lemma \ref{lma:products}, we have $h_2(n) \geq h_2(m) \geq 2$, proving the corollary.
\end{proof}

It would be interesting to determine completely, for each prime power $q>2$, the set $\{n \in \mathbb{N}:\ h_q(n)=0\}$. We remark that there are other zeros of $h_3$ besides $\{k3^d:\ k \in \{1,2\},\ d \in \mathbb{N}\}$ (those given by Theorem \ref{thm:prime-power}); for example, $h_3(4)=0$.

\section{Conclusion}
For each prime power $q$, we have found infinitely many values of $n$ such that $h_q(n) = \lfloor \log_q(n) \rfloor$ (these values of $n$ forming certain geometric series with common ratio $q$ or a power of $q$), and also infinitely many values of $n$ such that $h_q(n) =0$ (these values of $n$ forming geometric progressions $(k p^d)_{d \in \mathbb{N}}$, where $q$ is a power of the prime $p$ and $k \mid q-1$). This demonstrates that the behaviour of $h_q(n)$ as a function of $n$ is very irregular, depending heavily upon the prime factorization of $n$. It would be interesting to determine more precisely the behaviour of $h_q(n)$ for $n$ not of these forms. We remark again that the original question of the first author, as to whether $h_2(n)$ tends to infinity as $n$ tends to infinity over odd integers $n$, remains open, as does Isbell's conjecture.
\vspace{0.4cm}

\noindent {\em Note added after peer review:} Some time after a preprint of this paper first appeared on arXiv, Aaronson, Groenland and Johnston \cite{agj} proved that $h_2(p)=2$ for any prime $p$ such that $2$ is a primitive root modulo $p$; conditional on Artin's conjecture (that there are infinitely many such primes $p$), this answers the original question of the first author in the negative. We remark that Artin's conjecture is widely believed; indeed, as shown by Hooley \cite{hooley}, it is implied by the Generalized Riemann Hypothesis. However, the first author's question remains open in the strict (i.e., unconditional) sense. In \cite{agj}, Aaronson, Groenland and Johnston prove several other elegant results on the problems we consider above; for example, they prove that if $q$ is an odd prime, and $p > q$ is a prime such that $q$ is a primitive root modulo $p$, then $h_q(p)=0$. It is known, by a result of Heath-Brown \cite{hb}, that for all but at most two primes $q$, there are infinitely many primes $p$ for which $q$ is a primitive root modulo $p$, so the aforementioned result of Aaronson, Groenland and Johnson implies (unconditionally) that for all but at most two odd primes $q$, $h_q(n)$ does not tend to infinity as $n$ tends to infinity over integers coprime to $q$.

\subsubsection*{Acknowledgements}
We would like to thank Pablo Spiga for pointing out the references \cite{spiga-thesis,suzuki}, and Alex Fink for useful comments after a seminar on an early version of this paper. We would also like to thank two anonymous referees for their careful reading of the paper, and for their helpful suggestions.

\end{document}